\documentclass[a4paper,12pt]{article}

\usepackage{amsmath,amsthm,array}
\usepackage{amssymb}
\usepackage{amsrefs}
\usepackage{mathtools}
\usepackage{enumerate}
\usepackage{tikz-cd}
\usepackage[margin=2cm]{geometry}
 
\usepackage[utf8]{inputenc}
\usepackage{tgtermes}
\usepackage{mathptmx}
\usepackage[T1]{fontenc}
\usepackage{stackrel}
\usepackage{pict2e,picture}
\usepackage{xcolor}
 \usepackage{relsize}

\usepackage{comment}

\usepackage{hyperref}
\hypersetup{colorlinks=true}

\usepackage{derivative}

\usepackage{fixme}
\fxsetup{status=draft, layout=inline, author=, theme=color}

\newtheorem{theorem}{Theorem}[section]
\newtheorem{lemma}[theorem]{Lemma}
\newtheorem{proposition}[theorem]{Proposition}
\newtheorem{corollary}[theorem]{Corollary}

\theoremstyle{definition}
\newtheorem{example}[theorem]{Example}

\newtheorem{remark}[theorem]{Remark}

\newcommand{\An}{A_n}

\newcommand{\ZZ}{\mathbb{Z}}

\newcommand{\PGL}{\operatorname{PGL}}

\newcommand{\End}{\operatorname{End}}

\newcommand{\Mat}{\mathrm{Mat}}

\newcommand{\Tr}{\operatorname{tr}}

\newcommand{\ad}{\operatorname{ad}}

\newcommand{\leftbimod}[3]{\vphantom{#1}^{#2}{\kern-#3pt #1}}

\usepackage{fixme}
\fxsetup{status=draft, layout=inline, author=, theme=color}

\numberwithin{equation}{subsection}

\title{Lifts of endomorphisms of Weyl algebras modulo $p^2$}
\date{}

\author{Niels Lauritzen and Jesper Funch Thomsen} 
\begin{document}
\maketitle 

\begin{abstract}
Let $\varphi$ denote a $k$-algebra endomorphism of the $n$-th Weyl algebra 
$\An(k)$ over a perfect field $k$ of positive characteristic $p$. 
We prove that $\varphi$ can be lifted to an endomorphism of the Weyl 
algebra $\An(W_2(k))$ over the Witt vectors $W_2(k)$ of length two over $k$ 
if and only if $\varphi$ induces a Poisson morphism of the center of 
$\An(k)$. Furthermore, we improve a result of Tsuchimoto, which enables us to conclude that these equivalent statements hold at least when $\deg(\varphi) < p$. %the degree of $\varphi$ is less than $p$. 
In particular, we conclude that $\varphi$ is injective if $\deg(\varphi) < p$.
 \end{abstract}

\section*{Introduction}

Let $k$ be a perfect field of positive characteristic $p>0$, and let $\An(k)$ denote the $n$-th Weyl algebra over $k$, generated by elements
$$
z_1, z_2, \dots, z_{2n},
$$
subject to the relations
$$
[z_i, z_j] := \omega_{i,j}, \qquad 1 \leq i,j \leq 2n,
$$
where $\omega_{i,j}$ is the $(i,j)$-th entry of the matrix
$$
\omega =
\begin{pmatrix}
0 & -I_n \\
I_n & 0
\end{pmatrix} \in \Mat_{2n}(k),
$$
and $I_n \in \Mat_n(k)$ denotes the $n \times n$ identity matrix.  

In positive characteristic, the Weyl algebra is an Azumaya algebra with center $Z$ equal to a polynomial ring over $k$ in the $2n$ variables
$$
x_i := z_i^p, \quad i=1,\dots,2n.
$$
We consider the center $Z$ as a Poisson algebra with Poisson bracket 
defined by
$$
\{ x_i, x_j \} = -\omega_{i,j}, \qquad 1 \leq i,j \leq 2n.
$$

Let $\varphi$ be a $k$-algebra endomorphism of $\An(k)$. 
It is known that $\varphi(Z) \subseteq Z$, and one may therefore ask
under what conditions the induced endomorphism $\varphi_Z$ of $Z$
is a Poisson morphism, i.e.
$$
\varphi_Z(\{f,g\}) = \{\varphi_Z(f), \varphi_Z(g)\},
\qquad f,g \in Z.
$$
One typically expects this to be the case; however, counterexamples exist, even when 
$\varphi$ is an automorphism (see Example \ref{exnonpoisson}).
Nevertheless, some partial results are known.
Tsuchimoto \cite[Cor. 3.3]{Tsuchimoto2005Endomorphisms} proved that if the degree of $\varphi$ 
(to be defined below) is less than $p/2$, then $\varphi_Z$ is a 
Poisson morphism. In a slightly different direction, Belov-Kanel and Kontsevich 
 observed \cite{KanKon07} that if $\varphi$ can be lifted to an endomorphism 
of the Weyl algebra over a ring of characteristic $p^2$, then 
$\varphi_Z$ is  a Poisson morphism.  
As a side remark, it has been conjectured by Belov-Kanel and Kontsevich 
\cite{BelovKontsevich2005}
that if $k$ is a field of characteristic zero, there is expected to exist a 
canonical isomorphism between the automorphism group of the Weyl algebra 
$\An(k)$ and the group of symplectomorphisms of affine $2n$-space over $k$.

In this paper, we study the question of when a $k$-algebra endomorphism 
$\varphi$ can be lifted to an endomorphism of the Weyl algebra over the 
Witt vectors $W_2(k)$ of length two over $k$. We show that such a lifting 
exists precisely when the induced endomorphism $\varphi_Z$ of the center 
$Z$ is a Poisson morphism. 
Combining this with the result of Tsuchimoto, we see that $\varphi$ can 
be lifted to the Witt vectors of length two if the degree of $\varphi$ 
is less than $p/2$. Along the way, we also note that Tsuchimoto's result 
can be improved to include   the case where $\varphi$ has degree 
less than $p$, although it holds in even greater generality 
(see Thm. \ref{etalecenter}). We furthermore give an if-and-only-if criterion 
for ${\varphi}_Z$ to be a Poisson morphism, determined by the solutions to 
certain differential equations introduced by Tsuchimoto (see Prop. \ref{diffeqpois}).

In the last section (\S \ref{lastsect}) of the paper we give a few applications of this 
improved degree bound and restriction to the center. 
Injectivity of $\varphi$ always holds for $n=1$. Bavula conjectured it to hold in general, but Makar-Limanov gave an example of a non-injective endomorphism for $n=2$. 
Under the assumption that $\varphi <p$ (and, more generally, under weaker assumptions), 
 we prove that $\varphi$ is injective.

We also show that $\varphi$ is flat and an automorphism if it is birational
$\deg(\varphi)< p$. These two results were previously only known for fields of 
characteristic zero.
We refer to \S \ref{lastsect} for precise statements and references.

\section{Notation}
\label{notation}

Let $k$ be a perfect field of positive characteristic $p>0$, and 
let $\An(k)$ denote the $n$-th Weyl algebra over $k$. We view $\An(k)$ as generated by elements
$
z_1, z_2, \dots, z_{2n},
$ 
subject to the relations
$$
[z_i, z_j] := \omega_{i,j}, \qquad 1 \le i,j \le 2n,
$$
where $\omega_{i,j}$ is the $(i,j)$-th entry of
$$
\omega =
\begin{pmatrix}
0 & -I_n \\
I_n & 0
\end{pmatrix} \in \Mat_{2n}(k),
$$
and $I_n \in \Mat_n(k)$ denotes the $n \times n$ identity matrix.  

As a $k$-vector space, $\An(k)$ has basis
$$
z_1^{i_1} z_2^{i_2} \cdots z_{2n}^{i_{2n}}, \quad i_1,\dots,i_{2n} \in \ZZ_{\ge 0}.
$$
We will use multi-index notation and write such an element as ${\bf z}^{\bf i}$, where ${\bf i} = (i_1, \dots, i_{2n})$. 
The total degree of a basis element is defined by
$$
\deg({\bf z}^{\bf i}) := i_1 + \dots + i_{2n},
$$
and for a general nonzero element
$$
f = \sum_{\bf i} c_{\bf i} \, {\bf z}^{\bf i} \in \An(k), \quad c_{\bf i} \in k,
$$
we set
$$
\deg(f) := \max\{\deg({\bf z}^{\bf i}) : c_{\bf i} \neq 0\}.
$$

The center of $\An(k)$ is denoted by $Z$ and is a polynomial ring over $k$ in $2n$ variables, generated by
$$
x_i := z_i^p, \qquad i=1,\dots,2n.
$$
As a module over $Z$, the Weyl algebra is free of rank $p^{2n}$ with basis
$$
z_1^{i_1} z_2^{i_2} \cdots z_{2n}^{i_{2n}}, \quad 0 \le i_1,\dots,i_{2n} < p.
$$

Throughout, we let $\varphi$ denote a $k$-algebra endomorphism of $\An(k)$. 
We define the degree $\deg(\varphi)$ of $\varphi$ to be the maximal degree of the elements $\varphi(z_i)$, $i=1,\dots,2n$.
It is known (\cite[Cor. 2]{Tsuchimoto2003Preliminaries}) that $\varphi$ defines a $Z$-module basis
$$
\varphi(z_1)^{i_1} \varphi(z_2)^{i_2} \cdots \varphi(z_{2n})^{i_{2n}}, 
\quad 0 \le i_1,\dots,i_{2n} < p.
$$
In particular, any element of $\varphi(Z)$ is central, as it commutes with all basis elements. 
Hence 
$
\varphi(Z) \subseteq Z,
$
and thus $\varphi$, by restriction, induces a $k$-algebra endomorphism $\varphi_Z$ of $Z$.

Throughout this article, the notation $\delta_{i,j}$ is used for Kronecker's delta. 
Moreover, if $R$ is a ring and $x \in R$, we write $\ad(x)$ for the map 
from $R$ to itself given by the commutator with $x$.

\subsection{Witt vecors}

The ring of Witt vectors of length two over $k$, denoted $W_2(k)$,
is the set
$$
W_2(k) = \{(a_1,a_2) \mid a_1,a_2 \in k\},
$$
equipped with addition and multiplication given by
$$
(a_1,a_2) + (b_1,b_2)
=
\Bigl(
a_1+b_1,\;
a_2+b_2+\frac{1}{p}\bigl(a_1^p+b_1^p-(a_1+b_1)^p\bigr)
\Bigr),
$$
and
$$
(a_1,a_2)\cdot(b_1,b_2)
=
\Bigl(
a_1 b_1,\;
a_1^p b_2 + b_1^p a_2
\Bigr).
$$
Here the expression
$$
\frac{1}{p}\bigl(a_1^p+b_1^p-(a_1+b_1)^p\bigr)
$$
is interpreted as the reduction modulo $p$ of an integral polynomial,
which is well-defined since the numerator is divisible by $p$ in
$\mathbb{Z}[a_1,b_1]$.

The additive and multiplicative identities of $W_2(k)$ are $(0,0)$ and
$(1,0)$, respectively. Moreover,
$$
p\cdot(a_1,a_2) = (0,a_1^p).
$$
Since $k$ is a perfect field, every element $(a_1,a_2)\in W_2(k)$ can be
written uniquely in the form
$$
(a_1,a_2) = (a_1,0) + p\cdot (a_2^{1/p},0),
$$
where $a_2^{1/p} \in k$ denotes the $p$-th root of $a_2$.
The element $(a,0)\in W_2(k)$ is called the Teichmüller lift of $a\in k$.
From now on, whenever we regard an element $a\in k$ as an element of
$W_2(k)$, we implicitly mean its Teichmüller lift.
Thus every element of $W_2(k)$ admits a unique expression of
the form
$$
a + p\cdot b,
$$
with $a,b\in k$ viewed inside $W_2(k)$ via their Teichmüller lifts.

The ring $W_2(k)$ has characteristic $p^2$ and comes equipped with a
surjective ring homomorphism onto $k$, sending $(a_1,a_2)$ to $a_1$.
In particular, $W_2(k)/pW_2(k)$ is isomorphic to $k$.

\subsubsection{The Weyl algebra over $W_2(k)$}

The Weyl algebra $\An(W_2(k))$ over $W_2(k)$ is defined similarly to
$\An(k)$, except that the coefficients are now taken in $W_2(k)$. 
In particular, $\An(W_2(k))$ is a free $W_2(k)$-module with basis
$$
{\bf z}^{\bf i} = z_1^{i_1} z_2^{i_2} \cdots z_{2n}^{i_{2n}}, \quad i_1, \dots, i_{2n} \in \mathbb{Z}_{\ge 0}.
$$
Any element
$$
f = \sum_{\bf i} c_{\bf i} \, {\bf z}^{\bf i} \in \An(k), \quad c_{\bf i} \in k,
$$
has an associated element
$$
[f] = \sum_{\bf i} [c_{\bf i}] \, {\bf z}^{\bf i} \in \An(W_2(k)),
$$
where $[c_{\bf i}]$ denotes the Teichmüller lift of $c_{\bf i}$. 
We say that $[f]$ is the Teichmüller lift of $f$, and we will often simply write $f$ to denote this lift. In particular, every element of $\An(W_2(k))$ admits a unique decomposition
$$
f_1 + p \cdot f_2,
$$
where $f_1, f_2 \in \An(k)$ are viewed as elements of $\An(W_2(k))$ via their Teichmüller lifts.

% \begin{lemma}
% \label{centralelement}
% Let $u$ and $v$ be elements in the center $Z$ of $\An(k)$. Then
% $[u,v]$ and $p \cdot u$ are central elements in $\An(W_2(k))$.
% \end{lemma}
% \begin{proof}
% It suffices to show that
% $[u,v]$ and $p \cdot u$ commute with every element
% $w \in \An(k)$.
% Both cases follow by observing that if $u$ is a central element in
% $\An(k)$, then
% $$
% \ad(u)(w) = [u,w] \in p \cdot \An(k)
% $$
% when viewed in $\An(W_2(k))$.
% As $\An(W_2(k))$ has characteristic $p^2$, this implies
% $$
% [p\cdot u,w] = p[u,w] = 0,
% $$
% and hence $p\cdot u$ is central.
% Moreover, since
% $$
% \ad([u,v]) = [\ad(u), \ad(v)],
% $$
% and both $\ad(u)$ and $\ad(v)$ take values in $p\cdot \An(k)$,
% we conclude that $\ad([u,v]) = 0$. Hence $[u,v]$ is central.
% \end{proof}

 \begin{lemma}
\label{centralelement}
Let $u$ and $v$ be elements of $\An(W_2(k))$ whose images
in $\An(k)$ under the natural morphism lie in $Z$.
Then $[u,v]$ is a central element of $\An(W_2(k))$.
\end{lemma}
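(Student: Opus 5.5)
The plan is to show that $\ad([u,v])$ vanishes on $\An(W_2(k))$, using the Jacobi identity in the form
$$
\ad([u,v]) = [\ad(u), \ad(v)] = \ad(u)\circ\ad(v) - \ad(v)\circ\ad(u),
$$
together with the fact that $\An(W_2(k))$ has characteristic $p^2$.

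The first step is to observe that $\ad(u)$ maps $\An(W_2(k))$ into $p\cdot\An(W_2(k))$, and likewise for $\ad(v)$. Indeed, for any $w \in \An(W_2(k))$, the reduction of $[u,w]$ modulo $p$ equals $[\bar u, \bar w]$ in $\An(k)$, where bars denote images under the natural surjection $\An(W_2(k)) \to \An(k)$. Since $\bar u \in Z$, this commutator is $0$. The kernel of the reduction map is $p\cdot \An(W_2(k))$: this follows from the unique decomposition $f_1 + p f_2$ and the fact that $W_2(k)/pW_2(k) \cong k$, applied coefficientwise in the free basis ${\bf z}^{\bf i}$. Hence $[u,w] \in p\cdot\An(W_2(k))$.

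The second step is to note that $\ad(u)$ is additive and commutes with multiplication by $p$. So if $w = p w'$, then $\ad(u)(w) = p\,\ad(u)(w') \in p\cdot p\cdot\An(W_2(k)) = 0$, since $p^2 = 0$ in $W_2(k)$. Consequently $\ad(u)\circ\ad(v) = 0$ and $\ad(v)\circ\ad(u) = 0$. Therefore $\ad([u,v]) = 0$, which says exactly that $[u,v]$ commutes with every element, i.e.\ it is central.

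No step looks like a real obstacle. The only point needing care is the identification of the kernel of $\An(W_2(k)) \to \An(k)$ with $p\cdot\An(W_2(k))$, and this comes directly from freeness over $W_2(k)$ with basis ${\bf z}^{\bf i}$ together with $\ker(W_2(k)\to k) = pW_2(k)$.
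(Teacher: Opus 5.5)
Your proposal is correct and follows the paper's proof: you show that $\ad(u)$ and $\ad(v)$ take values in $p\cdot\An(W_2(k))$, so their composites vanish because $p^2=0$, and then $\ad([u,v]) = [\ad(u),\ad(v)] = 0$. You also spell out two steps the paper leaves implicit, namely that the kernel of reduction is $p\cdot\An(W_2(k))$ and that $\ad(u)(pw') = p\,\ad(u)(w')$.
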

\begin{proof}
By assumption, the operators $\ad(u)$ and $\ad(v)$
on $\An(W_2(k))$ take values in $p \cdot \An(k)$.
Since $\An(W_2(k))$ has characteristic $p^2$, we
therefore conclude that the operator
$$
\ad([u,v]) = [\ad(u), \ad(v)]
$$
is zero. Hence $[u,v]$ is central.
\end{proof}

\section{Lift of endomorphisms}
\label{sectlift}

In this section, we study when a $k$-algebra endomorphism 
$\varphi$ of $\An(k)$ admits a lift to a $W_2(k)$-algebra endomorphism 
$\Phi$ of $\An(W_2(k))$.  

Fix $\varphi$ and let
$$
u_i := \varphi(z_i), \quad i=1,\dots,2n.
$$
Viewing $u_1,\dots,u_{2n}$ as elements of $\An(W_2(k))$
via the Teichmüller lift, we have
\begin{equation}\label{commuiuj}
[u_i,u_j] = \omega_{i,j} + p \cdot u_{ij}, \quad i,j=1,\dots,2n,
\end{equation}
for some uniquely determined elements $u_{ij} \in \An(k)$.

\begin{lemma}
\label{potentiallift}
Let $v_1,\dots,v_{2n}$ be elements of $\An(k)$. Then
$$
\Phi(z_i) = u_i + p \cdot v_i, \quad i=1,\dots,2n,
$$
defines a lift of $\varphi$ if and only if
$$
u_{ij} + [u_i, v_j] - [u_j, v_i] = 0, \quad i,j=1,\dots,2n.
$$
\end{lemma}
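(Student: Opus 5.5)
Since $\An(W_2(k))$ is generated as a $W_2(k)$-algebra by $z_1,\dots,z_{2n}$ subject only to the relations $[z_i,z_j]=\omega_{i,j}$, an assignment $z_i \mapsto w_i$ extends to a $W_2(k)$-algebra endomorphism exactly when $[w_i,w_j]=\omega_{i,j}$ for all $i,j$. The plan is to check this condition for $w_i = u_i + p\cdot v_i$.

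First I would show that any such endomorphism $\Phi$ is automatically a lift of $\varphi$. Reducing modulo $p$ sends $w_i$ to $u_i=\varphi(z_i)$, so the reduction of $\Phi$ agrees with $\varphi$ on generators. Hence the only content of the statement is the relation condition.

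Next I would expand the commutator. Using bilinearity and $p^2=0$ in $\An(W_2(k))$, the term $p^2[v_i,v_j]$ vanishes, and we get
$$
[u_i+p v_i,\,u_j+p v_j] = [u_i,u_j] + p\bigl([u_i,v_j]+[v_i,u_j]\bigr).
$$
By \eqref{commuiuj} this equals
$$
\omega_{i,j} + p\bigl(u_{ij} + [u_i,v_j] - [u_j,v_i]\bigr).
$$

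The one point needing care is how the factor $p$ interacts with the Teichmüller lifts. The commutators $[u_i,v_j]$ are computed in $\An(W_2(k))$ using the Teichmüller lifts of the elements. However, $p\cdot x$ depends only on the reduction of $x$ modulo $p$, because $p\cdot pW_2(k)=0$. So $p\bigl(u_{ij}+[u_i,v_j]-[u_j,v_i]\bigr)$ equals $p$ times the Teichmüller lift of the element $u_{ij}+[u_i,v_j]-[u_j,v_i]$ computed in $\An(k)$.

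Finally I would use that multiplication by $p$ gives an injective map $\An(k)\to\An(W_2(k))$, $f\mapsto p\cdot[f]$. This follows from the uniqueness of the decomposition $f_1+p\cdot f_2$, or concretely from $p\cdot(a,0)=(0,a^p)$ together with the injectivity of Frobenius on $k$. Therefore the relation $[w_i,w_j]=\omega_{i,j}$ holds if and only if $u_{ij}+[u_i,v_j]-[u_j,v_i]=0$ in $\An(k)$, which is the claimed criterion.

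I do not expect a real obstacle here. The only subtle step is the bookkeeping between the Teichmüller lift and the mod-$p$ reduction, which is handled by the observation that $p\cdot$ factors through reduction modulo $p$.
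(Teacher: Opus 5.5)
Your proposal is correct and is essentially the paper's proof: both reduce the lifting condition to the defining relations $[\Phi(z_i),\Phi(z_j)]=\omega_{i,j}$ and expand the commutator using $p^2=0$ and \eqref{commuiuj}. You also make explicit two points the paper leaves implicit: that $p\cdot x$ depends only on $x$ modulo $p$, and that $f\mapsto p\cdot f$ is injective on $\An(k)$. Together these justify the paper's ``immediately yields''.
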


\begin{proof}
The map $\Phi$ defines a lift of $\varphi$ if and only if
$$
[u_i + p \cdot v_i, u_j + p \cdot v_j] = \omega_{i,j}, \quad i,j=1,\dots,2n.
$$
Expanding the left-hand side gives
$$
[u_i + p \cdot v_i, u_j + p \cdot v_j] 
= \omega_{i,j} + p \cdot \big(u_{ij} + [u_i, v_j] - [u_j, v_i]\big),
$$
which immediately yields the claimed equivalence.
\end{proof}

 \begin{lemma}
\label{ppoweridcomm}
Let $u$ and $v$ be elements in $\An(W_2(k))$. Suppose that
$$
[u,v] = \kappa + p \, w,
$$
for some $w \in \An(W_2(k))$ and $\kappa \in W_2(k)$. Then
$$
[u^t, v] = t \, \kappa \, u^{\,t-1} + p \, \sum_{i=0}^{t-1} u^i w u^{\,t-i-1}, 
\quad t \in \mathbb{N}.
$$
In particular,
$$
[u^p, v] = p \, \Big( \kappa \, u^{\,p-1} + \ad(u)^{p-1}(w) \Big).
$$
\end{lemma}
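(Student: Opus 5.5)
The general formula follows from the derivation identity
$$
[u^t,v]=\sum_{i=0}^{t-1} u^i [u,v] u^{t-i-1},
$$
which holds in any associative ring and can be proved by induction on $t$ using $[ab,v]=a[b,v]+[a,v]b$. Substituting $[u,v]=\kappa+p\,w$ and using that $\kappa\in W_2(k)$ is central, the $\kappa$-terms contribute $t\,\kappa\,u^{t-1}$. The $w$-terms contribute $p\sum_{i=0}^{t-1}u^i w u^{t-i-1}$, which is exactly the first claim.

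For $t=p$, the term $p\,\kappa\,u^{p-1}$ is already in the stated form. It remains to show that
$$
p\sum_{i=0}^{p-1}u^i w u^{p-1-i}=p\,\ad(u)^{p-1}(w).
$$
Because of the factor $p$ and the fact that $\An(W_2(k))$ has characteristic $p^2$, it suffices to verify this identity modulo $p$, that is, in a ring of characteristic $p$ (the reduction $\An(k)$). Write $L$ and $R$ for left and right multiplication by $u$; these commute, and $\ad(u)=L-R$. Then
$$
\ad(u)^{p-1}=(L-R)^{p-1}=\sum_{i=0}^{p-1}\binom{p-1}{i}L^i(-R)^{p-1-i}.
$$
In characteristic $p$ we have $\binom{p-1}{i}\equiv(-1)^i$, so the coefficient of $L^iR^{p-1-i}$ is $(-1)^i(-1)^{p-1-i}=(-1)^{p-1}$. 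This equals $1$ when $p$ is odd, and also when $p=2$ since then $-1=1$. Hence $\ad(u)^{p-1}(w)\equiv\sum_i u^iwu^{p-1-i}\pmod p$, and multiplying by $p$ gives the claim.

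The only delicate point is the passage from an identity modulo $p$ to the identity after multiplication by $p$. This is justified because $p\cdot x$ depends only on $x$ modulo $p$: if $x-y\in p\An(W_2(k))$, then $px-py\in p^2\An(W_2(k))=0$.
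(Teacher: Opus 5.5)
Your proposal is correct and follows essentially the same route as the paper's proof: you expand $[u^t,v]$ as $\sum_i u^i[u,v]u^{t-1-i}$ (the paper proves the same formula by induction on $t$), and you then reduce modulo $p$ and apply $\binom{p-1}{i}\equiv(-1)^i$ to the commuting left and right multiplication operators. Your explicit observation that $p\cdot x$ depends only on $x$ modulo $p$, because $p^2=0$, and your separate check of the case $p=2$ make precise two points the paper leaves implicit.
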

\begin{proof}
We prove the first identity by induction on $t \ge 1$. The case $t=1$ is trivial. 
Assume the identity holds for some $t \ge 1$. Then
\begin{align*}
[u^{\,t+1}, v] 
&= u^t \, [u,v] + [u^t, v] \, u \\
&= u^t \, (\kappa + p \, w) + t \, \kappa \, u^t + p \sum_{i=0}^{t-1} u^i w u^{\,t-i} \\
&= (t+1) \, \kappa \, u^t + p \sum_{i=0}^{t} u^i w u^{\,t-i},
\end{align*}
as claimed.

For the second identity, we observe that the first identity implies
$$
[u^p, v] = p \, \Big( \kappa \, u^{\,p-1} + \sum_{i=0}^{p-1} u^i w u^{\,p-i-1} \Big).
$$
It therefore suffices to prove the (well known) identity
$$
\ad(\hat u)^{\,p-1}(\hat w) = \sum_{i=0}^{p-1} \hat u^i \hat w \hat u^{\,p-i-1},
$$
for $\hat u, \hat w \in \An(k)$. This follows by noting that $\ad(\hat u)$ is the difference of 
two commuting operators, multiplication by $\pm \hat u$ from the left and right, and combining this 
with the binomial identity
$ 
\binom{p-1}{i} = (-1)^i \quad \text{in } k.
$ 
\end{proof}

 \begin{proposition}
\label{propppowercomm}
Let $u$ and $v$ be elements of $\An(W_2(k))$. Suppose that
$$
[u,v] = \kappa + p \, w
$$
for some $w \in \An(W_2(k))$ and $\kappa \in W_2(k)$. Then
$$
[u^p, v^p] = p \cdot \Big( - \kappa^p + \ad(v)^{\,p-1} \ad(u)^{\,p-1}(w) \Big).
$$
\end{proposition}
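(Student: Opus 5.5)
The plan is to apply Lemma \ref{ppoweridcomm} twice, once for each $p$-th power. The first application is to the pair $(u,v)$ itself. The lemma gives
$$
[u^p, v] = p \cdot W, \qquad W := \kappa\, u^{p-1} + \ad(u)^{p-1}(w).
$$
Hence $[v, u^p] = 0 + p\cdot(-W)$. This again has the shape required by Lemma \ref{ppoweridcomm}, now with the pair $(v, u^p)$ in place of $(u,v)$, with scalar $\kappa = 0$, and with $-W$ in place of $w$. The second application of the lemma then yields
$$
[v^p, u^p] = -p\, \ad(v)^{p-1}(W), \qquad\text{so}\qquad [u^p, v^p] = p\,\kappa\,\ad(v)^{p-1}(u^{p-1}) + p\,\ad(v)^{p-1}\ad(u)^{p-1}(w).
$$
The second summand is already the desired term. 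It remains to show that $p\,\kappa\,\ad(v)^{p-1}(u^{p-1}) = -p\,\kappa^p$.

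Since everything is multiplied by $p$ and $\An(W_2(k))$ has characteristic $p^2$, an expression $p\cdot x$ depends only on the image of $x$ in $\An(k)$. I will use this observation throughout; it is also implicit in the use of Lemma \ref{ppoweridcomm}, whose $\ad(u)^{p-1}$ identity was established modulo $p$. So I reduce modulo $p$ and write $\bar u, \bar v, \bar\kappa$ for the images of $u, v, \kappa$.

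In $\An(k)$ we have $[\bar u, \bar v] = \bar\kappa$, a scalar. Hence $\ad(\bar v)(\bar u) = -\bar\kappa$, and $\ad(\bar v)$ acts on polynomials in $\bar u$ as the derivation $-\bar\kappa\, \tfrac{d}{d\bar u}$. Therefore
$$
\ad(\bar v)^{p-1}(\bar u^{p-1}) = (-\bar\kappa)^{p-1}(p-1)! = -(-\bar\kappa)^{p-1},
$$
using Wilson's theorem. Multiplying by $\bar\kappa$ gives $-(-1)^{p-1}\bar\kappa^p$. This equals $-\bar\kappa^p$ for $p$ odd. For $p=2$ it equals $\bar\kappa^2 = -\bar\kappa^2$ in characteristic $2$. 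Since $p\cdot x$ depends only on $x$ modulo $p$, this gives $p\,\kappa\,\ad(v)^{p-1}(u^{p-1}) = -p\,\kappa^p$, and the proposition follows.

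The main delicate point is bookkeeping rather than a real obstacle. One has to be sure the second application of the lemma is legitimate, with $\kappa = 0$ and $w$ replaced by $-W$. One also has to justify every reduction modulo $p$ inside a term multiplied by $p$, including the identification of $p\kappa^p$ computed from $\bar\kappa^p$ with $p\kappa^p$ in $W_2(k)$. Finally, the sign from Wilson's theorem must be handled separately when $p=2$.
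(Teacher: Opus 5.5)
Your proposal is correct and follows the paper's proof essentially verbatim: apply Lemma \ref{ppoweridcomm} first to $(u,v)$, then to $(v,u^p)$, and evaluate $\ad(v)^{p-1}(u^{p-1})$ modulo $p$ using Wilson's theorem. Your explicit handling of the sign $(-1)^{p-1}$ when $p=2$ is a detail the paper passes over silently.
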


\begin{proof}
By Lemma \ref{ppoweridcomm}, we have
$$
[u^p, v] = p \, \Big( \kappa \, u^{\,p-1} + \ad(u)^{\,p-1}(w) \Big).
$$
In particular, applying the same lemma to the pair $v$ and $u^p$, we obtain
\begin{align*}
[v^p, u^p] &= p \cdot \Big( - \kappa \, \ad(v)^{\,p-1}(u^{\,p-1}) - \ad(v)^{\,p-1} \ad(u)^{\,p-1}(w) \Big) \\
&= p \cdot \Big( - (p-1)! \, \kappa^p - \ad(v)^{\,p-1} \ad(u)^{\,p-1}(w) \Big) \\
&= p \cdot \Big( \kappa^p - \ad(v)^{\,p-1} \ad(u)^{\,p-1}(w) \Big),
\end{align*}
which gives the claimed identity.
\end{proof}

\begin{theorem}
\label{detofcij}
For $i,j = 1,\dots,2n$, define
$$
c_{ij} := \ad(u_i)^{\,p-1} \ad(u_j)^{\,p-1}(u_{ij})
$$
as an element in $A_n(k)$. Then $c_{ij}$ belongs to the center of $\An(k)$, and moreover
$$
p \cdot c_{ij} = [u_i^p, u_j^p] + p \cdot \omega_{i,j}  
$$
in $\An(W_2(k))$.
\end{theorem}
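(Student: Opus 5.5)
Apply Proposition \ref{propppowercomm} with $u = u_i$, $v = u_j$ (Teichmüller lifts in $\An(W_2(k))$), $\kappa = \omega_{i,j}$ and $w = u_{ij}$, which is legitimate by (\ref{commuiuj}). This gives
$$
[u_i^p, u_j^p] = p \cdot \Big( -\omega_{i,j}^p + \ad(u_j)^{\,p-1}\ad(u_i)^{\,p-1}(u_{ij}) \Big).
$$
Since $\omega_{i,j} \in \{0,\pm 1\}$, we have $\omega_{i,j}^p = \omega_{i,j}$ in $W_2(k)$ (with $p$ odd; for $p=2$ the sign is irrelevant modulo the factor $p$, because $2\cdot(-1) = 2 \cdot 1$ in characteristic $4$). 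The term multiplied by $p$ only matters modulo $p$, so the $\ad$-expression may be computed in $\An(k)$. Rearranging gives $p \cdot \ad(u_j)^{p-1}\ad(u_i)^{p-1}(u_{ij}) = [u_i^p,u_j^p] + p\,\omega_{i,j}$.

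This has the two $\ad$ operators in the opposite order from the definition of $c_{ij}$. I would handle this in one of two ways. The first is to apply the proposition to the pair $(u_j,u_i)$ instead, with $\kappa=\omega_{j,i}=-\omega_{i,j}$ and $w=u_{ji}=-u_{ij}$. This yields
$$
[u_j^p,u_i^p] = p\bigl(\omega_{i,j} - \ad(u_i)^{p-1}\ad(u_j)^{p-1}(u_{ij})\bigr),
$$
and negating gives exactly $p\,c_{ij} = [u_i^p,u_j^p] + p\,\omega_{i,j}$. This is the cleanest route. The second, as a sanity check, is to show directly that $\ad(u_i)^{p-1}$ and $\ad(u_j)^{p-1}$ commute on $\An(k)$, using that $[u_i,u_j]$ is a scalar in $\An(k)$.

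For centrality, the left-hand side $[u_i^p,u_j^p]$ is central in $\An(W_2(k))$ by Lemma \ref{centralelement}, because $u_i^p,u_j^p$ reduce to $\varphi(x_i),\varphi(x_j)\in Z$. Hence $p\,c_{ij} = [u_i^p,u_j^p] + p\,\omega_{i,j}$ is central in $\An(W_2(k))$. It remains to deduce that $c_{ij}$ is central in $\An(k)$. For any $a\in\An(k)$ we have $0=[p\,c_{ij},a]=p\,[c_{ij},a]$. Multiplication by $p$ induces an injection $\An(k)\cong \An(W_2(k))/p\An(W_2(k)) \to p\An(W_2(k))$, since $\An(W_2(k))$ is free over $W_2(k)$ and $pW_2(k)\cong k$. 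Therefore $[c_{ij},a]=0$ in $\An(k)$, so $c_{ij}\in Z$.

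The main obstacle is bookkeeping: the sign and the ordering of the $\ad$'s, and checking that Teichmüller lifts interact correctly with the factor $p$. Applying the proposition to $(u_j,u_i)$ resolves the ordering issue.
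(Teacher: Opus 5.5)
Your proof is correct and follows the paper's argument: apply Proposition \ref{propppowercomm} to the pair $(u_i,u_j)$ (or $(u_j,u_i)$) with $\kappa=\omega_{i,j}$ and $w=u_{ij}$, and get centrality from Lemma \ref{centralelement}. Three of your extra steps fill in details the paper leaves implicit: fixing the order of the $\ad$'s, which the paper swaps silently using $[\ad(u_i),\ad(u_j)]=\ad(\omega_{i,j})=0$ on $\An(k)$; checking $p\,\omega_{i,j}^p=p\,\omega_{i,j}$; and deducing that $c_{ij}$ is central in $\An(k)$ from the centrality of $p\,c_{ij}$ in $\An(W_2(k))$.
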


\begin{proof}
By Proposition \ref{propppowercomm}, we have
$$
[u_i^p, u_j^p] = p \cdot \Big( - \omega_{i,j} + \ad(u_i)^{\,p-1} \ad(u_j)^{\,p-1}(u_{ij}) \Big),
$$
since
$$
[u_i, u_j] = \omega_{i,j} + p \cdot u_{ij}.
$$
This gives the formula for $p \cdot c_{ij}$.  

 It remains to show that $c_{ij}$ is central in $\An(k)$. It follows from the formula above that this will hold if $[u_i^p, u_j^p]$ is central in $\An(W_2(k))$, which is a direct consequence of Lemma~\ref{centralelement}.
\end{proof}

%  \subsection{Lie algebra cohomology}

% Consider the abelian Lie algebra $\mathfrak{a}$ over $k$ with basis
% $$
% e_1, e_2, \dots, e_{2n}.
% $$
% Then $\An(k)$ is a module over $\mathfrak{a}$ via the action
% $$
% e_i \cdot w := [u_i, w], \quad w \in \An(k), \; i=1,\dots,2n.
% $$
% The Chevalley-Eilenberg complex for $A_n(k)$ over $\mathfrak{a}$ consists of $q$-cochains defined by
% $$
% C^{q}(\mathfrak{a},A_n(k)):=\Hom_{k}\!\bigl(\bigwedge\nolimits^{q} \mathfrak{a},\, A_n(k)\bigr)
% $$
% with coboundary operators $\delta: C^{q}(\mathfrak{a},A_n(k))\rightarrow 
% C^{q+1}(\mathfrak{a},A_n(k))$ given by
% $$
% (\delta f)(e_{i_0}\wedge \cdots\wedge e_{i_q}) = \sum_{j=0}^{q} (-1)^j \, e_{i_j} \cdot f(e_{i_0}\wedge \cdots \wedge \widehat{e_{i_j}}\wedge \cdots \wedge e_{i_q})
% $$
% for $1\leq i_1 < \cdots < i_q \leq 2n$. In this setting, $\alpha\in C^2(\mathfrak{a}, A_n(k))$ given by
% $$
% \alpha(e_i \wedge e_j) = u_{ij}
% $$
% for $1\leq i < j \leq 2n$ is a $2$-cocycle (this follows from 
% the Jacobi identity using \eqref{commuiuj}).
% The conditions in Lemma \ref{potentiallift} now says that 
% a lift of $\varphi$ exists if and only if the cohomology class of 
% $\alpha$ in $H^2(\mathfrak{a}, \An(k))$ is trivial. 
% The complex $(C^\bullet(\mathfrak{a}, A_n(k)),  \delta)$ can be 
% identified with the de Rham complex of a polynomial ring in 
% $2n$ variables.

\subsection{Obstruction interpreted via cohomology}

Lemma~\ref{potentiallift} indicates that the elements $u_{ij}$ define a cohomology class 
which encodes the obstruction to lifting $\varphi$ to an endomorphism of $\An(W_2(k))$. 
To clarify this idea, we consider the Chevalley--Eilenberg complex of $A_n(k)$, viewed 
as a module over an abelian Lie algebra of dimension $2n$, where the basis elements act 
via the operators $\ad(u_i)$. 
In an appropriate setting, the operators $\ad(u_i)$ can be interpreted as formal derivations 
of $A_n(k)$. Under this identification, the Chevalley--Eilenberg complex associated with 
the Lie algebra generated by the $\ad(u_i)$ reduces to the de Rham complex of a
polynomial ring.
 More precisely, 
recall (as explained in Section \ref{notation}) that 
$\An(k)$ is a free module over its center
$k[z_1^p,\dots,z_{2n}^p]$ with basis
$$
  u_1^{i_1} u_2^{i_2} \cdots u_{2n}^{i_{2n}},
\quad 0 \le i_1,\dots,i_{2n} < p.
$$
Equivalently, the following set of elements also forms a basis:
$$
\hat u_1^{i_1} \hat u_2^{i_2} \cdots \hat u_{2n}^{i_{2n}},
\quad 0 \le i_1,\dots,i_{2n} < p,
$$
where 
$$
\hat u_i = -u_{n+i}, \quad \hat u_{n+i} = u_i \quad \text{for } i=1,\dots,n.
$$
Moreover, since $[u_i, \hat u_j] = \delta_{i,j}$, we have
$$
\ad(u_l) (\hat u_1^{i_1} \hat u_2^{i_2} \cdots \hat u_{2n}^{i_{2n}}) = 
i_l \, \hat u_1^{i_1} \dots \hat u_l^{i_l-1} \cdots \hat u_{2n}^{i_{2n}}.
$$ 
Hence, $\ad(u_l)$ acts as the formal derivation with respect to $\hat u_l$. 

This naturally leads us to consider the $k$-linear bijection
$$
\psi \colon \An(k) \longrightarrow S = k[y_1,\dots,y_{2n}],
$$
defined by
$$
\psi\!\left(
f(z_1^p,\dots,z_{2n}^p)\,
\hat u_1^{i_1} \hat u_2^{i_2} \cdots \hat u_{2n}^{i_{2n}}
\right)
=
f(y_1^p,\dots,y_{2n}^p)\,
y_1^{i_1} y_2^{i_2} \cdots y_{2n}^{i_{2n}},
$$
where, under this correspondence, the formal derivation 
$\pdv{}{y_i}$ on the right-hand side matches the operator 
$\ad(u_i)$ on the left-hand side. 

Consider now the de Rham complex of the polynomial ring
$S = k[y_1,\dots,y_{2n}]$, which is the differential complex
$$
0 \longrightarrow S
\xrightarrow{\,d\,} \Omega^1_{S/k}
\xrightarrow{\,d\,} \Omega^2_{S/k}
\xrightarrow{\,d\,} \cdots
\xrightarrow{\,d\,} \Omega^{2n}_{S/k}
\longrightarrow 0,
$$
where 
$$
\Omega^1_{S/k} = \bigoplus_{i=1}^{2n} S \, dy_i
$$
denotes the module of Kähler differentials, and
$$
\Omega^l_{S/k} = \wedge^l_S \Omega^1_{S/k}, \quad l=1,\dots,2n.
$$
The differential $d$ is $k$-linear and is given by
$$
d\!\left(f \, dy_{i_1} \wedge \cdots \wedge dy_{i_l}\right)
=
\sum_{i=1}^{2n} \pdv{f}{y_i}   \, dy_i
\wedge dy_{i_1} \wedge \cdots \wedge dy_{i_l}.
$$

The cohomology of this complex is called the de Rham cohomology of $S$.
We denote its degree-$l$ part by $H^l_{\mathrm{dR}}(S/k)$.
Since the field $k$ has characteristic $p>0$, the de Rham cohomology of $S$
is non-trivial. More precisely, for each $l = 0,\dots,2n$, one has 
(see, e.g., \cite[Thm.~7.2]{Katz1970Nilpotent})
$$
H^l_{\mathrm{dR}}(S/k)
\cong
\bigoplus_{1 \le i_1 < \cdots < i_l \le 2n}
k[y_1^p,\dots,y_{2n}^p]\,
y_{i_1}^{p-1}\cdots y_{i_l}^{p-1}
\, dy_{i_1}\wedge\cdots\wedge dy_{i_l}.
$$
As a special case, the second de Rham cohomology group is
$$
H^2_{\mathrm{dR}}(S/k)
\cong
\bigoplus_{1 \le i < j \le 2n}
k[y_1^p,\dots,y_{2n}^p]\,
y_i^{p-1} y_j^{p-1}
\, dy_i \wedge dy_j.
$$
In particular, $H^2_{\mathrm{dR}}(S/k)$ is a free module over the subring
$k[y_1^p,\dots,y_{2n}^p]$, generated by the classes of the differential
forms
$$
y_i^{p-1} y_j^{p-1} \, dy_i \wedge dy_j,
\qquad 1 \le i < j \le 2n.
$$

\bigskip

\begin{lemma}
\label{closed2form}
The $2$-form
$$
\sum_{i < j} \psi(u_{ij}) \, dy_i \wedge dy_j \in \Omega^2(S/k)
$$
is closed.
\end{lemma}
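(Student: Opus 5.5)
The plan is to translate closedness into an identity among the $u_{ij}$ inside $\An(k)$, and to obtain that identity from the Jacobi identity in $\An(W_2(k))$.

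First, compute the exterior derivative. Since $\psi$ intertwines $\ad(u_l)$ with $\pdv{}{y_l}$, we have
$$
d\Big(\sum_{i<j}\psi(u_{ij})\,dy_i\wedge dy_j\Big)=\sum_{l}\sum_{i<j}\psi\big([u_l,u_{ij}]\big)\,dy_l\wedge dy_i\wedge dy_j .
$$
The relation \eqref{commuiuj} shows that $u_{ij}=-u_{ji}$ and $u_{ii}=0$, because $\omega$ is antisymmetric and an element of $\An(W_2(k))$ decomposes uniquely as $f_1+p\cdot f_2$. Collecting terms for each triple $l<i<j$, closedness is therefore equivalent to the cyclic identity
$$
[u_l,u_{ij}]+[u_i,u_{jl}]+[u_j,u_{li}]=0 \quad\text{in } \An(k), \qquad \text{for all } l,i,j.
$$

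Second, derive this identity from the Jacobi identity
$$
[u_l,[u_i,u_j]]+[u_i,[u_j,u_l]]+[u_j,[u_l,u_i]]=0,
$$
which holds in the associative algebra $\An(W_2(k))$ for the Teichmüller lifts $u_l,u_i,u_j$. Substitute $[u_i,u_j]=\omega_{i,j}+p\cdot u_{ij}$. The constant $\omega_{i,j}$ is central, so it drops out, and we obtain
$$
p\cdot\big([u_l,u_{ij}]+[u_i,u_{jl}]+[u_j,u_{li}]\big)=0 .
$$
Here the $u_{ij}$ stand for their Teichmüller lifts. Modulo $p$ the commutator $[u_l,u_{ij}]$ agrees with its value in $\An(k)$, and multiplication by $p$ only depends on residues mod $p$. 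The claim then follows because $p\cdot a=0$ in $\An(W_2(k))$ forces $a\equiv 0 \bmod p$: the map $\An(k)\to p\An(W_2(k))$, $a\mapsto p\cdot a$, is injective, as one sees from $p\cdot(a_1,a_2)=(0,a_1^p)$ on coefficients.

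The only point needing care is the bookkeeping in the first step. One has to check the signs and the antisymmetry that make the coefficient of $dy_l\wedge dy_i\wedge dy_j$ exactly the cyclic sum above, and confirm that $\psi$ matches $\ad(u_l)$ with $\pdv{}{y_l}$ on all of $\An(k)$ and not only on the basis monomials. The latter holds because $\ad(u_l)$ is $Z$-linear, while $\pdv{}{y_l}$ is linear over $k[y_1^p,\dots,y_{2n}^p]$.
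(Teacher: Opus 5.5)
Your proof is correct and follows essentially the same route as the paper. Both arguments rewrite closedness via $\psi$ as the cyclic identity $[u_l,u_{ij}]+[u_i,u_{jl}]+[u_j,u_{li}]=0$, then obtain it from the Jacobi identity in $\An(W_2(k))$ after substituting $[u_s,u_t]=\omega_{s,t}+p\cdot u_{st}$; you are more explicit than the paper's terse argument about the antisymmetry $u_{ji}=-u_{ij}$ and the injectivity of $a\mapsto p\cdot a$ needed to divide by $p$.
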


\begin{proof}
The closedness of this $2$-form is equivalent to the identities
$$
\pdv{\psi(u_{ij})}{y_l}
+ \pdv{\psi(u_{jl})}{y_i}
- \pdv{\psi(u_{il})}{y_j} = 0,
$$
for all integers $i < j < l$.

Since $\ad(u_i)$ corresponds to the operator $\pdv{}{y_i}$ 
under the bijection $\psi$, this condition may equivalently be rewritten as
$$
\ad(u_l)(u_{ij}) + \ad(u_i)(u_{jl}) - \ad(u_j)(u_{il}) = 0,
$$
which  follows from the Jacobi identity
$$
[u_l,[u_i,u_j]] + [u_i,[u_j,u_l]] - [u_j,[u_i,u_l]] = 0
$$
and the definition $u_{st} = [u_s,u_t]$, when the calculation is performed in $\An(W_2(k))$.
\end{proof}

\begin{theorem}
\label{obstruction}
There exist elements $v_1,\dots,v_{2n}$ in $A_n(k)$ such that 
$$
u_{ij} + [u_i, v_j] - [u_j, v_i] = c_{ij} \, \hat u_i^{\,p-1} \hat u_j^{\,p-1},
$$
where $c_{ij}$ is the central element in $A_n(k)$ from Theorem \ref{detofcij}, defined by
$$
p \cdot c_{ij} = [u_i^p, u_j^p] + p \cdot \omega_{ij}.
$$
Moreover, the endomorphism $\varphi$ admits a lift to a $W_2(k)$-algebra 
endomorphism of $\An(W_2(k))$ if and only if 
$$
c_{ij} = 0, \qquad i,j=1,\dots,2n.
$$
\end{theorem}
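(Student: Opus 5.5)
Our plan is to transport everything through the bijection $\psi$ and use the computation of $H^2_{\mathrm{dR}}(S/k)$. By Lemma~\ref{closed2form} the form $\alpha = \sum_{i<j}\psi(u_{ij})\,dy_i\wedge dy_j$ is closed. By the cited description of $H^2_{\mathrm{dR}}(S/k)$ (Cartier), $\alpha$ is cohomologous to a unique form $\sum_{i<j} g_{ij}(y^p)\, y_i^{p-1}y_j^{p-1}\,dy_i\wedge dy_j$ with $g_{ij}\in k[y_1^p,\dots,y_{2n}^p]$. So there is a $1$-form $\beta=\sum_l \psi(w_l)\,dy_l$ with $\alpha - d\beta$ equal to that representative. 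Since $\ad(u_l)$ corresponds to $\partial/\partial y_l$ under $\psi$ (and $\psi$ is linear over the center $k[z^p]\leftrightarrow k[y^p]$, because $\ad(u_l)$ kills central elements), the coefficient of $dy_i\wedge dy_j$ in $d\beta$ is $\psi([u_i,w_j]-[u_j,w_i])$. Setting $v_l := -w_l$ gives
$$u_{ij} + [u_i,v_j]-[u_j,v_i] = c'_{ij}\,\hat u_i^{p-1}\hat u_j^{p-1},$$
with $c'_{ij}\in Z$ the element corresponding to $g_{ij}$. (One should check that $\psi(f\,\hat u^{\mathbf i})=\psi(f)\,y^{\mathbf i}$ for $f\in Z$, which is immediate from the definition, so the monomial $\hat u_i^{p-1}\hat u_j^{p-1}$ with central coefficient maps to $g_{ij}y_i^{p-1}y_j^{p-1}$.)

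Next we identify $c'_{ij}$ with $c_{ij}$. Apply $\ad(u_i)^{p-1}\ad(u_j)^{p-1}$ to both sides. On the left, the term $[u_i,v_j]$ is killed since $\ad(u_i)^p=0$ (it corresponds to $\partial_{y_i}^p=0$, or directly $\ad(u_i)^p=\ad(u_i^p)=0$ as $u_i^p\in Z$), and similarly $[u_j,v_i]$ is killed by $\ad(u_j)^p$; the operators commute since $[\ad u_i,\ad u_j]=\ad(\omega_{ij})=0$. So the left becomes $c_{ij}$ by definition in Theorem~\ref{detofcij}. On the right, $\partial_{y_i}^{p-1}\partial_{y_j}^{p-1}(y_i^{p-1}y_j^{p-1}) = ((p-1)!)^2=1$, giving $c'_{ij}$. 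Hence $c'_{ij}=c_{ij}$, and the formula $p\cdot c_{ij}=[u_i^p,u_j^p]+p\,\omega_{ij}$ is Theorem~\ref{detofcij}.

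For the equivalence: if all $c_{ij}=0$, the $v_l$ just found satisfy Lemma~\ref{potentiallift}, so a lift exists. Conversely, if a lift exists, Lemma~\ref{potentiallift} gives $v'_l$ with $u_{ij}+[u_i,v'_j]-[u_j,v'_i]=0$; applying $\ad(u_i)^{p-1}\ad(u_j)^{p-1}$ as above yields $c_{ij}=0$. (Alternatively, lifting gives $[\Phi(z_i)^p,\Phi(z_j)^p]=\Phi([z_i^p,z_j^p])=-p\,\omega_{ij}$, and $\Phi(z_i)^p\equiv u_i^p$ modulo terms killed in the commutator.) The main obstacle is the bookkeeping in the first step: making sure that $\psi$ intertwines the antisymmetrized commutators with the exterior derivative including signs, and that the cohomology representative lands exactly on $\hat u_i^{p-1}\hat u_j^{p-1}$ with central coefficient; the annihilation argument in the second step handles the identification of the coefficient cleanly.
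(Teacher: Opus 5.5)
Your proposal is correct and follows the paper's proof essentially step for step. You take the Cartier representative of the closed form from Lemma~\ref{closed2form}, transport the primitive back through $\psi$ to obtain the $v_l$ and the central coefficients, identify those coefficients with $c_{ij}$ by applying $\ad(u_i)^{p-1}\ad(u_j)^{p-1}$, and read off the lifting criterion from Lemma~\ref{potentiallift}. Your parenthetical alternative for necessity is also valid: it uses $[\Phi(z_i)^p,\Phi(z_j)^p]=[u_i^p,u_j^p]$ together with Theorem~\ref{detofcij}.
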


\begin{proof}
By Lemma \ref{closed2form} and the description of $H^2_{\mathrm{dR}}(S/k)$, 
there exist polynomials $h_1,\dots,h_{2n} \in S$ and $h_{ij} \in k[y_1^p,\dots,y_{2n}^p]$, 
for $i,j=1,\dots,2n$, such that
$$
\sum_{i < j} \psi(u_{ij}) \, dy_i \wedge dy_j
=
d\Big(\sum_i h_i \, dy_i\Big) + \sum_{i < j} h_{ij} \, y_i^{p-1} y_j^{p-1} \, dy_i \wedge dy_j.
$$
Hence, for $i<j$, we have
$$
\psi(u_{ij}) = \pdv{h_j}{y_i} - \pdv{h_i}{y_j} 
+ h_{ij} \, y_i^{p-1} y_j^{p-1}, \quad i,j=1,\dots,2n.
$$
Now choose elements $v_i \in \An(k)$ and central elements $\hat c_{ij} \in Z$ such that 
$$
\psi(v_i)=-h_i, \quad \psi(\hat c_{ij})=h_{ij}, \quad i,j=1,\dots,2n.
$$
Since $\ad(u_i)$ corresponds to $\pdv{}{y_i}$ under $\psi$, we conclude
$$
u_{ij} + [u_i, v_j] - [u_j, v_i] = \hat c_{ij} \, \hat u_i^{\,p-1} \hat u_j^{\,p-1}, 
\quad i,j=1,\dots,2n,
$$
when $i<j$.
Applying the operator 
$
\ad(u_i)^{\,p-1} \, \ad(u_j)^{\,p-1}
$
to both sides, and using Theorem \ref{detofcij}, together with the facts that 
$\ad(u_i)$ and $\ad(u_j)$ commute, and that 
$\ad(u_i)^p = \ad(u_j)^p = 0$,
we obtain
$$
c_{ij} = \hat c_{ij}, \quad i,j=1,\dots,2n,
$$
for $i<j$. This proves the first statement.

It follows that if $c_{ij}=0$ for all $i,j$, then
$$
u_{ij} + [u_i, v_j] - [u_j, v_i] = 0, \quad i,j=1,\dots,2n,
$$
which by Lemma \ref{potentiallift} implies that $\varphi$ admits a lift. 
Conversely, if $\varphi$ admits a lift, then by Lemma \ref{potentiallift} 
there exist elements $w_i$, $i=1,\dots,2n$, such that
$$
u_{ij} + [u_i, w_j] - [u_j, w_i] = 0, \quad i,j=1,\dots,2n.
$$
Applying $\ad(u_i)^{p-1} \ad(u_j)^{p-1}$ and using Theorem \ref{detofcij} again, we conclude
$$
c_{ij} = 0, \quad i,j=1,\dots,2n.
$$
\end{proof}

\subsection{Lift and Poisson morpisms}
\label{sectPoisson}
  
We consider the center $Z = k[x_1,\dots,x_{2n}]$ of the Weyl algebra $A_n(k)$ as a Poisson algebra with the standard bracket
$$
\{ f , g \} = \sum_{l=1}^n 
\left(
\pdv{f}{x_l} \pdv{g}{x_{l+n}}
-
\pdv{f}{x_{l+n}} \pdv{g}{x_l}
\right),
$$
or equivalently,
$$
\{f,g\} = (\nabla f)^{T} \, \omega^{-1} \, (\nabla g),
$$
where the gradient of $h \in Z$ is defined by
$$
\nabla h =
\begin{pmatrix}
\pdv{h}{x_1} \\[1mm]
\vdots \\[1mm]
\pdv{h}{x_{2n}}
\end{pmatrix},
$$ 
and $\omega$ is the matrix
$$
\omega =
\begin{pmatrix}
0 & -I_n \\[1mm]
I_n & 0
\end{pmatrix} \in \Mat_{2n}(k).
$$
A $k$-algebra endomorphism $\phi: Z \to Z$ is called a  Poisson morphism  if it preserves the bracket:
$$
\phi(\{f,g\}) = \{\phi(f),\phi(g)\}, \quad \text{for all } f,g \in Z.
$$
Equivalently, this condition can be expressed in terms of the Jacobian matrix 
$$
J_\phi = \Bigl(\pdv{\phi(x_i)}{x_j}\Bigr)
$$
as
\begin{equation}
\label{poissonmorp}
J_\phi \, \omega^{-1} \, J_\phi^T = \omega^{-1}.
\end{equation}
Since $\omega$ is invertible, we immediately see that $\det J_\phi = \pm 1$, 
which implies that a Poisson morphism is étale. Furthermore, (\ref{poissonmorp}) is equivalent to the identity
\begin{equation}
\label{2-formstable}
J_\phi^T \, \omega \, J_\phi = \omega,
\end{equation}
which expresses the invariance of the standard $2$-form
$$
\sum_{l=1}^n dx_l \wedge dx_{l+n}
$$
under $\phi$, that is,
$$
\sum_{l=1}^n dx_l \wedge dx_{l+n} = \sum_{l=1}^n d\phi(x_l) \wedge d\phi(x_{l+n}).
$$
 
One may relate the above Poisson bracket on $Z$ to commutators in 
$\An(W_2(k))$. More precisely, if $f$ and $g$ are elements in
$Z$ then the commutator
$[f,g]$
in $\An(W_2(k))$ is contained in $p \cdot A_n(W_2(k))$. 
Moreover $[f,g]$ is a central element in $A_n(W_2(k))$
by Lemma \ref{centralelement}, so there exists a 
unique $h \in Z$ 
such that 
$$[f,g] = p \cdot h.$$
Actually 
$ 
h =  \{ f, g \}
$ 
or in other words
\begin{align}
\label{idpoiscomm}
\{ f, g \} =   \frac{[f,g]}{p} \in Z, 
\end{align}
where the right hand side should be interpreted as explained 
above. Actually the right hand side of (\ref{idpoiscomm}) is 
obviously a derivation in both entries, so for the identity 
in (\ref{idpoiscomm}) to be satisfied, it suffices that 
it is satisfied, when $f$ and $g$ are both one of the 
coordinate functions $x_1,\dots,x_{2n}$. In this case,
the identity is a consequence of Theorem \ref{obstruction}
applied to the case, where $\varphi$ is the identity 
morphism. 

\bigskip 

We now relate the remarks above to the morphism $\varphi$.
Let
$$
J_\varphi = \left( \pdv{\varphi(x_i)}{x_j} \right) \in \Mat_{2n}(Z)
$$
denote the Jacobian matrix associated with the restriction
$\varphi_Z$ of $\varphi$ to the center $Z$, and let
$$
C = (c_{ij}) \in \Mat_{2n}(Z)
$$
be the matrix whose $(i,j)$-entry is the element $c_{ij} \in Z$
defined in Theorem \ref{detofcij}.

\begin{theorem}
\label{theoremidjac}
One has
$$
J_\varphi \, \omega^{-1} \, J_\varphi^T = \omega^{-1} + C.
$$
In particular, the induced morphism $\varphi_Z$ is a Poisson morphism
if and only if $\varphi$ admits a lift to a $W_2(k)$-algebra
endomorphism of $\An(W_2(k))$.
\end{theorem}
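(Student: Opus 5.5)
Both sides of the claimed identity are matrices over $Z$, so it suffices to compare entries. The $(i,j)$ entry of $J_\varphi\,\omega^{-1}J_\varphi^T$ is
$$
\sum_{a,b}\pdv{\varphi(x_i)}{x_a}\,(\omega^{-1})_{ab}\,\pdv{\varphi(x_j)}{x_b}=\{\varphi(x_i),\varphi(x_j)\},
$$
by the matrix form $\{f,g\}=(\nabla f)^T\omega^{-1}(\nabla g)$ of the bracket. Hence the identity reduces to showing
$$
\{\varphi(x_i),\varphi(x_j)\}=(\omega^{-1})_{ij}+c_{ij},\qquad i,j=1,\dots,2n.
$$

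For the left side I use (\ref{idpoiscomm}). Writing $f=\varphi(x_i)=u_i^p$ and $g=\varphi(x_j)=u_j^p$, both lie in $Z$. The plan is to evaluate the commutator $[u_i^p,u_j^p]$ in $\An(W_2(k))$ using the Teichmüller lifts of $u_i,u_j$. A subtle point is that (\ref{idpoiscomm}) is formulated for arbitrary lifts of central elements of $\An(k)$ to $\An(W_2(k))$. I therefore need $[f,g]/p$ to be independent of the lift. If the lift of $f$ is changed by $p\,a$, the commutator changes by $p[a,g]$, and $[a,g]\equiv 0$ modulo $p$ because $g$ reduces to a central element. So $p[a,g]=0$ and the commutator is well defined.

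The one real obstacle is that the Teichmüller lift of $u_i^p$ need not equal $(\text{Teich } u_i)^p$. This is harmless by the independence argument just given: both elements reduce to $u_i^p$ modulo $p$. Hence
$$
\{u_i^p,u_j^p\}=\tfrac{1}{p}[u_i^p,u_j^p]\quad\text{in } \An(W_2(k)).
$$

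Theorem \ref{detofcij} gives $[u_i^p,u_j^p]=p\,(c_{ij}-\omega_{i,j})$, so $\{\varphi(x_i),\varphi(x_j)\}=c_{ij}-\omega_{ij}$. It remains to check that $-\omega_{ij}=(\omega^{-1})_{ij}$. This holds because $\omega^2=-I$, so $\omega^{-1}=-\omega$. As a consistency check, the identity case gives $\{x_i,x_j\}=-\omega_{ij}$, matching the introduction. This proves $J_\varphi\,\omega^{-1}J_\varphi^T=\omega^{-1}+C$.

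For the final assertion, (\ref{poissonmorp}) says that $\varphi_Z$ is a Poisson morphism if and only if $J_\varphi\,\omega^{-1}J_\varphi^T=\omega^{-1}$. By the identity just proved, this is equivalent to $C=0$. By Theorem \ref{obstruction}, $C=0$ is in turn equivalent to the existence of a $W_2(k)$-lift of $\varphi$.
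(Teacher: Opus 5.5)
Your proof is correct and follows the same route as the paper. You identify the $(i,j)$ entry of $J_\varphi\,\omega^{-1}J_\varphi^T$ with $\{\varphi(x_i),\varphi(x_j)\}$, rewrite it through (\ref{idpoiscomm}) as $[u_i^p,u_j^p]/p$, and apply Theorem \ref{detofcij} together with $\omega^{-1}=-\omega$; the final equivalence then comes from (\ref{poissonmorp}) and Theorem \ref{obstruction}. Your added check that $[f,g]/p$ does not depend on the chosen lifts, which justifies using the $p$-th power of the Teichm\"uller lift of $u_i$ in place of the Teichm\"uller lift of $u_i^p$, addresses a point the paper passes over silently, and your argument for it is sound.
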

\begin{proof}
The $(i,j)$-th entry of 
$
J_\varphi \, \omega^{-1} \, J_\varphi^T
$
equals 
$
\{\varphi(x_i), \varphi(x_j)\},
$
which, by identity~\eqref{idpoiscomm}, equals
$$
\frac{[\varphi(x_i), \varphi(x_j)]}{p}.
$$
By Theorem~\ref{detofcij}, the latter equals
$
-\omega_{i,j} + c_{ij},
$
which coincides with the $(i,j)$-th entry of
$
\omega^{-1} + C.
$
This proves the first part of the claimed statement. In particular, by the discussion above, $\varphi_Z$ is a Poisson morphism if and only if $C$ is the zero matrix. Moreover, by Theorem~\ref{obstruction}, $C=0$ if and only if $\varphi$ admits a lift to a $W_2(k)$-algebra endomorphism of $\An(W_2(k))$.
\end{proof}

\section{Tsuchimoto's approach and its refinements}

For completeness, we include here a description of Tsuchimoto's approach
in \cite{Tsuchimoto2005Endomorphisms}. 
Most of this is contained in 
Tsuchimoto's paper, but the setup there, 
involving connections on vector bundles, is slightly different 
from the one presented below.

\subsection{A trivialization of  $\An(k)$}

The Weyl algebra $\An(k)$ is an Azumaya algebra and as such 
it has a trivialization. We will work with a specific 
trivialization which will be described in the following.

Recall that the center $Z$ of $\An(k)$, is a 
polynomial ring over $k$ in $2n$ variables, generated by 
$$ x_i := z_i^p, \qquad i=1,\dots,2n.$$
The polynomial ring $S = k[y_1, y_2, \dots, y_{2n}]$ 
in $2n$ variables will in the following be viewed 
as a faithfully flat
extension of $Z$ via the identifications
$$ y_i^p = z_i^p, \qquad i=1,\dots,2n.$$
Then $\An(k) \otimes_Z S$ is 
isomorphic to a matrix algebra $\Mat_{p^n}(S)$ over
$S$. More precisely, let $M$ denote the 
$S$-module
$$ M = S[T_1,T_2,\dots,T_n]/(T_1^p, T_2^p, \dots,T_n^p).$$
Then $M$ comes with two types of natural $S$-linear 
endomorphisms; multiplication by $T_i$, denoted
by $\nu_i$, and formal derivative with respect to $T_i$,
denoted by $\nu_{i+n}$.  Then 
\begin{align*}
  \An(k) \otimes_ Z S & \rightarrow \End_S(M), \\
  z_i & \mapsto \nu_i + y_i, 
\end{align*}
defines an explicit isomorphism of $S$-algebras
\cite[Lemma 5]{Tsuchimoto2003Preliminaries}. From now on, we  identify $\An(k)$ with a subset 
 of $\End_S(M)$ by this isomorphism.

\subsection{$\An(k)$-linear endomorphisms of $M$}

The $k$-algebra $\End_Z(M)$ of $Z$-linear endomorphisms of $M$ plays an important role 
in describing the endomorphisms of $\An(k)$. In this section, we study certain special 
elements of $\End_Z(M)$. Recall that $\An(k)$ is viewed as a subalgebra of $\End_S(M)$, 
which in turn is a subalgebra of $\End_Z(M)$.

Let $\pdv{}{y_i}$
 denote formal partial derivatives with respect to
$y_i$ regarded as an element in $\End_Z(M)$.
Define $Z$-linear 
endomorphisms
$$
     L_i  =  \pdv{}{y_i} + \hat{\nu}_i,
$$
for $i=1,2,\dots,2n$, where 
$$ \hat{\nu}_i = \begin{cases}
 - \nu_{i+n}, & \text{if $i=1,\dots,n$,} \\
 \nu_{i-n}, & \text{if $i=n+1,\dots,2n$.}
\end{cases}$$
Note that 
$$ [\hat \nu_i, \nu_j] = -\delta_{i,j}.$$
In particular,
$$[L_i, z_j] = [\pdv{}{y_i} + \hat{\nu}_i, \nu_i+y_i]=0, $$
for all $i,j =1,\dots,2n$, and 
hence each $L_i$ is an element $\End_{\An(k)}(M)$.

\begin{remark}
In fact, the elements $L_1,\dots,L_{2n}$, together with $S$, generate
$\End_{\An(k)}(M)$. Moreover, the centralizer of
$\End_{\An(k)}(M)$ in $\End_Z(M)$ equals $\An(k)$. Since this result will not be
needed here, we omit the proof.
\end{remark}

\subsection{Endomorphisms of $\An(k)$}

Let $\varphi$ denote a $k$-algebra endomorphism of $\An(k)$. 
As explained in Section~\ref{notation}, the center $Z$ is preserved by $\varphi$. 
Thus there is a natural extension of $\varphi_Z$ to a $k$-algebra endomorphism of $S$. 
Combining this with the identification
$$
\End_S(M) \simeq \An(k) \otimes_Z S,
$$
we obtain an induced $k$-algebra endomorphism of $\End_S(M)$, 
which, by abuse of notation, we also denote by $\varphi$.

Once we fix an $S$-basis of $M$, we may identify $\End_S(M)$
with the matrix ring $\Mat_{p^n}(S)$ and $\varphi$ with an
endomorphism of $\Mat_{p^n}(S)$.  Here we fix the monomial
basis 
$$
T_1^{i_1} \cdots T_n^{i_n}, \qquad 0 \leq i_1,\dots,i_n < p,
$$
of $M$. In this basis $\nu_i$, for $i=1,\dots,2n$, have matrix
representations with coefficients in $\ZZ/p\ZZ$.  Therefore, 
by the lemma below, there exist an invertible element 
$G \in \End_S(M)$ such that
$$
\varphi(z_i) = G \cdot \nu_i \cdot G^{-1} + \overline{y_i},
\qquad i=1,\dots,2n,$$
where we use the notation $\overline{r} : = \varphi(r) 
\in S$ whenever $r \in S$.

\medskip
 
 This following lemma
should be  well-known; however, since we were unable to find 
a precise reference, we provide a proof below.

\begin{lemma}
Let $R$ be a unique factorization domain, and
let $\phi: \Mat_m(R) \rightarrow \Mat_m(R)$ be a ring homomorphism. Then there exists 
an invertible matrix $G \in \Mat_m(R)$ and a ring endomorphism 
$\phi_c: R \rightarrow R$ such that 
$$ \phi \big ( (a_{ij}) \big) = G \cdot \big( \phi_c(a_{ij}) \big) \cdot G^{-1},$$
for all $(a_{ij}) \in \Mat_m(R)$.  
Moreover, the matrix $G$ is 
unique up to multiplication by a  units in $R$; equivalently its image in $\PGL_m(R)$ 
is uniquely determined. 
\end{lemma}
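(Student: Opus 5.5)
The plan is to reduce to the case where $\phi$ fixes the standard matrix units. Then the restriction of $\phi$ to scalars is automatically scalar-valued, and it gives $\phi_c$. Throughout, I assume $\phi$ is unital, i.e.\ $\phi(1)=1$; this is the case relevant for the application to $\End_S(M)$.

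Let $e_{ij}\in\Mat_m(R)$ denote the standard matrix units and set $E_{ij}:=\phi(e_{ij})$. These satisfy
$$E_{ij}E_{kl}=\delta_{j,k}E_{il}, \qquad \sum_i E_{ii}=1.$$
Put $F=R^m$ and $P_i:=E_{ii}F$. Then $F=\bigoplus_i P_i$, so each $P_i$ is finitely generated projective. Moreover $E_{i1}$ and $E_{1i}$ restrict to mutually inverse isomorphisms $P_1\cong P_i$. Since $R$ is a domain, projective modules have a well-defined rank, computed after tensoring with $\operatorname{Frac}(R)$. As the $P_i$ all have the same rank and their ranks sum to $m$, each $P_i$ has rank one.

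The key step is to show that $P_1$ is free. A finitely generated projective module of rank one over a domain is isomorphic to an invertible ideal. Over a UFD the Picard group is trivial, because every invertible ideal is principal: a height-one prime in a UFD is principal, or equivalently one can use that $\operatorname{Cl}(R)=0$ and $\operatorname{Pic}(R)\hookrightarrow \operatorname{Cl}(R)$ for a Noetherian normal domain. Here I would quote the standard fact, arguing directly if necessary that an invertible ideal of a UFD is generated by the gcd of a generating set. I expect this to be the main obstacle, mainly in deciding how much to quote rather than prove.

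Granting it, write $P_1=Rg_1$ and set $g_i:=E_{i1}g_1$. Then $g_i$ generates $P_i$, so $g_1,\dots,g_m$ is an $R$-basis of $F$. Let $G$ be the invertible matrix with columns $g_1,\dots,g_m$. We have $E_{ij}g_k=\delta_{j,k}g_i$, which says exactly that $G^{-1}E_{ij}G=e_{ij}$.

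Now define $\psi(X):=G^{-1}\phi(X)G$. This is a ring endomorphism of $\Mat_m(R)$ fixing every $e_{ij}$. For $a\in R$, the matrix $\psi(aI)$ commutes with all $e_{ij}$, hence it is scalar; write $\psi(aI)=\phi_c(a)I$. Since $\psi$ is a unital ring homomorphism, $\phi_c$ is a ring endomorphism of $R$. For a general matrix,
$$\psi\bigl((a_{ij})\bigr)=\sum_{i,j}\psi(a_{ij}I)\,e_{ij}=\bigl(\phi_c(a_{ij})\bigr),$$
which is the claimed formula $\phi\bigl((a_{ij})\bigr)=G\,\bigl(\phi_c(a_{ij})\bigr)\,G^{-1}$.

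For uniqueness, suppose $G'$ and $\phi_c'$ also work. Applying both formulas to $e_{ij}$ shows that $H:=G^{-1}G'$ commutes with every $e_{ij}$, so $H$ is a scalar matrix $uI$. Since $H$ is invertible, $u$ is a unit of $R$. Hence $G'=uG$, and the image of $G$ in $\PGL_m(R)$ is uniquely determined. (Comparing the formulas on $aI$ then also shows $\phi_c'=\phi_c$.)
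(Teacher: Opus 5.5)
Your proof is correct. Its skeleton is the paper's: set $F_{ij}=\phi(e_{ij})$, build the basis $g_i=F_{i1}g_1$ of $R^m$ from a generator $g_1$ of $F_{11}R^m$, conjugate by the matrix $G$ with these columns, and use the centralizer of the matrix units both to define $\phi_c$ and to get uniqueness up to a unit. The paper also tacitly assumes $\phi(1)=1$ when it writes $\sum_i F_{ii}=1$.

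The genuine difference is how you show that $F_{11}R^m$ is free of rank one. You argue structurally: the summands $F_{ii}R^m$ are mutually isomorphic projectives whose ranks add up to $m$, so each has rank one, and rank-one projectives over a UFD are free because $\operatorname{Pic}(R)=0$. The paper avoids projective modules altogether. It takes a nonzero $\mathbf r\in F_{11}R^m$ and divides out the gcd of its entries; this keeps it in $F_{11}R^m$ because $F_{11}$ is idempotent and $R$ is a domain. It then checks by hand that the vectors $F_{i1}\mathbf r$ are independent and span, by clearing denominators and using that $c\mid\beta r_k$ for all $k$ with $\gcd(r_k)=1$ forces $c\mid\beta$. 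That is exactly the direct gcd proof that invertible ideals of a UFD are principal, specialized to this situation.

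So the paper's route is elementary and self-contained. Yours rests on quoted facts: well-defined rank over a domain, rank-one projective $\cong$ invertible ideal, and triviality of $\operatorname{Pic}$. In exchange, your argument isolates the only property of $R$ actually used, namely that rank-one projectives are free. The same proof therefore works verbatim for any domain with $\operatorname{Pic}(R)=0$, e.g.\ local domains, which need not be UFDs.

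One small caution: the justification via $\operatorname{Pic}(R)\hookrightarrow\operatorname{Cl}(R)$ for Noetherian normal domains does not cover an arbitrary UFD, which need not be Noetherian. The direct gcd argument you mention covers all UFDs, and in the application $R=S$ is a polynomial ring anyway.
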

\begin{proof}
Let $E_{ij}$, for $1 \leq i, j \leq m$, denote the $(i,j)$-th standard matrix
unit in $\Mat_m(R)$ and let $F_{ij} = \phi(E_{ij})$. Then $\sum_i F_{ii} = 1$,
so in particular $F_{ii} \neq 0$ for some $i$. Since $F_{ii} = F_{i1} \, F_{11} \, F_{1i}$, we must also have $F_{11} \neq 0$. Fix 
a nonzero ${\bf r}=(r_1,\dots, r_m) \in F_{11} R^m$ such that the
entries $r_1, r_2, \dots, r_m
\in R$ have no nontrivial common divisors. This is possible,
since if ${\bf r} = \beta \, {\bf r}'$, for ${\bf r}' \in R^m$ and 
$\beta \in R$, then ${\bf r}' = F_{11} {\bf r}'$.  For $i=1,\dots,m,$ set ${\bf v}_i = 
F_{i1} {\bf r}$. Then we have the relations 
$$ F_{ij} \, {\bf v}_l = \delta_{jl} \cdot {\bf v}_i.$$

We claim that the vector ${\bf v}_1, {\bf v}_2, \dots, {\bf v}_m$ 
form a basis for the free $R$-module $R^m$. First,  linear 
independence follows as
$$ F_{1i} \, (\alpha_1 \cdot {\bf v}_1 + \alpha_2 \cdot {\bf v}_2 + \cdots + \alpha_m \cdot {\bf v}_m) = \alpha_i \cdot {\bf r},$$
valid for all $\alpha_1, \dots, \alpha_m \in R$. Next, let ${\bf v} \in R^m$. Since  
the $m+1$ elements ${\bf v}_1, {\bf v}_2, \dots, {\bf v}_m, {\bf v}$ are linearly dependent over $R$,  there exists a nonzero element $c \in R$ such that 
$$ c \cdot {\bf v} =  \beta_1 \cdot {\bf v}_1 + \beta_2 \cdot {\bf v}_2 + \cdots + \beta_m \cdot {\bf v}_m,$$
for some $\beta_1, \beta_2, \dots, \beta_m \in R$. Applying $F_{1i}$
to both sides yields $c \cdot F_{1i} {\bf v} = \beta_i \cdot 
{\bf r}$. By the choice of ${\bf r}$, it follows  that 
$c$ divides each $\beta_i$.
Hence ${\bf v}$ lies in the $R$-span of ${\bf v}_1, {\bf v}_2, \dots, {\bf v}_m$, proving 
that these vectors form a basis of $R^m$. 

Let $G \in \Mat_m(R)$ denote the matrix whose columns are ${\bf v}_1, 
{\bf v}_2, \dots, {\bf v}_m$. Then $F_{ij}=G \cdot  E_{ij} \cdot G^{-1}$ 
as both sides of the equation agree  on the basis ${\bf v}_1, {\bf v}_2, \dots, {\bf v}_m$. Now recall that the centralizer of the set $\{E_{ij} \}$ equals
the center of $\Mat_m(R)$. Since 
conjugation preserves centralizers, the centralizer of the set $\{ F_{ij}
\}$
is therefore also   equal to the center of $\Mat_m(R)$. Consequently, the map
$\phi$ must map the center of $\Mat_m(R)$ to itself; that is $\phi( \alpha 
\cdot {\rm I} )$, for $\alpha \in R$, must be of the form $\beta \cdot
{\rm I}$, for some $\beta \in R$. This allows us to  define
$\phi_c$ by $\phi_c(\alpha) = \beta$. With his notation, we 
obtain
$$ \phi \big ( (a_{ij}) \big) = G \cdot \big( \phi_c(a_{ij}) \big) \cdot G^{-1},$$
as desired. In particular, $G$ must satisfy  
$$ \phi(E_{ij}) = G  \cdot E_{ij} \cdot G^{-1},$$
which implies that $G$ is determined up to multiplication by an unit in the centralizer
of the set $\{E_{ij} \}$; that is up to a unit in $R$. 
\end{proof}

\begin{remark}
\label{remakrtrace}
Let $n=1$ and consider the trace in $\End_S(M)$ of an element of the form
$$
\nu_1^{i_1} \nu_2^{i_2}, \quad 0 \leq i_1, i_2 < p.
$$
It is a simple exercise to see that
$$
\Tr(\nu_1^{i_1} \nu_2^{i_2}) = -\delta_{i_1,p-1} \, \delta_{i_2,p-1}.
$$
Since trace is invariant under conjugation by $G$, we therefore also have
$$
\Tr\big(\varphi(z_1)^{i_1} \varphi(z_2)^{i_2}\big) =
\Tr\big((\nu_1 + \overline{y_1})^{i_1} (\nu_2 + \overline{y_2})^{i_2}\big)
= -\delta_{i_1,p-1} \, \delta_{i_2,p-1}.
$$
For general $n$, we may consider $M$ as an $n$-fold tensor product of the case $n=1$
above. Therefore
$$
\Tr\big(\varphi(z_1)^{i_1} \cdots \varphi(z_{2n})^{i_{2n}}\big)
= (-1)^n \, \delta_{i_1,p-1} \cdots \delta_{i_{2n},p-1},
\quad 0 \leq i_1, \dots, i_{2n} < p.
$$
As mentioned in Section~\ref{notation}, elements of the form
$$
\varphi(z_1)^{i_1} \cdots \varphi(z_{2n})^{i_{2n}}, \quad 0 \leq i_1, \dots, i_{2n} < p,
$$
constitute a basis for $\An(k)$ as a module over $Z$.  
When writing an element $u \in \An(k)$ in this basis, it follows that $(-1)^n \Tr(u)$ is the coefficient of the basis element
$$
\varphi(z_1)^{p-1} \cdots \varphi(z_{2n})^{p-1}.
$$
Hence,
$$
\left( \ad(\varphi(z_1))^{p-1} \cdots \ad(\varphi(z_{2n}))^{p-1} \right)(u) = (-1)^n \Tr(u),
$$
so that the operator
$$
\ad(\varphi(z_1))^{p-1} \cdots \ad(\varphi(z_{2n}))^{p-1}
$$
does not depend on $\varphi$. In particular,
$$
\ad(\varphi(z_1))^{p-1} \cdots \ad(\varphi(z_{2n}))^{p-1} = \ad(z_1)^{p-1} \cdots \ad(z_{2n})^{p-1}.
$$
\end{remark}

\subsection{Differential equations}

We fix an endomorphism $\varphi$ of $\An(k)$ and 
the associated notation as above.

 \begin{proposition}
\label{basisofAlinearphi}
For $i=1,\dots,2n$, the elements 
$$ 
    L_{\varphi,i}  = G \,  \Biggl( \pdv{}{y_i} + \sum_{l=1}^{2n} 
\pdv{\overline{y}_l}{y_i} \hat \nu_l  \Biggr) \, G^{-1}, 
$$ 
in $\End_Z(M)$ belong to $\End_{\An(k)}(M)$.
\end{proposition}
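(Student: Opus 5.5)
We need to show that $L_{\varphi,i}$ commutes with every element of $\An(k)$, viewed inside $\End_Z(M)$. Since $\An(k)$ is generated as a $k$-algebra by $z_1,\dots,z_{2n}$, and $L_{\varphi,i}$ is $Z$-linear, hence $k$-linear, it suffices to check $[L_{\varphi,i}, z_j]=0$ for all $j$. This is inconvenient directly, because $L_{\varphi,i}$ is naturally adapted to $\varphi(z_j)$ rather than to $z_j$. So the plan is to transport the problem through $\varphi$. The model case is the identity morphism, with $G=1$ and $\overline{y}_l=y_l$: there $L_{\varphi,i}=L_i$, and $[L_i,z_j]=0$ was verified above.

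The first step is to compute the commutator with the image elements $\varphi(z_j)=G\nu_jG^{-1}+\overline{y_j}$. Conjugating by $G$, set $D_i=\pdv{}{y_i}+\sum_l \pdv{\overline{y}_l}{y_i}\hat\nu_l$, so that $L_{\varphi,i}=G D_i G^{-1}$. Note that $\pdv{}{y_i}$ does not commute with $G$. We can write
$$
[G D_i G^{-1}, G\nu_j G^{-1}+\overline{y_j}] = G\,[D_i,\nu_j]\,G^{-1} + [G D_i G^{-1},\overline{y_j}].
$$
Here $\overline{y_j}\in S$ is a scalar commuting with $G$, so $[GD_iG^{-1},\overline{y_j}]=G[D_i,\overline{y_j}]G^{-1}=\pdv{\overline{y_j}}{y_i}$. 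Next, $\nu_j$ has constant ($\ZZ/p\ZZ$) matrix entries, so it commutes with $\pdv{}{y_i}$. Using $[\hat\nu_l,\nu_j]=-\delta_{lj}$, we get $[D_i,\nu_j]=-\pdv{\overline{y_j}}{y_i}$. The two terms cancel, so $L_{\varphi,i}$ commutes with every $\varphi(z_j)$, and hence with the $S$-algebra generated by them. This subalgebra is $\varphi(\End_S(M))$ together with $S$.

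The second step is the main obstacle: we must pass from commuting with $\varphi(\An(k))$ to commuting with $\An(k)$. The tool is the $Z$-basis from Section~\ref{notation}: the monomials $\varphi(z_1)^{i_1}\cdots\varphi(z_{2n})^{i_{2n}}$ with $0\le i_l<p$ form a $Z$-basis of $\An(k)$. So every $z_j$ is a $Z$-linear combination of products of the $\varphi(z_l)$. Since $L_{\varphi,i}$ is $Z$-linear, it commutes with multiplication by elements of $Z$, and therefore $[L_{\varphi,i},z_j]=0$. We should check carefully that $Z$ acts on $M$ compatibly, which holds because $x_l=y_l^p$ acts through $S$. We should also check that $L_{\varphi,i}$ is genuinely $Z$-linear, i.e. that $G^{-1}$ and $\pdv{}{y_i}$ are $Z$-linear: derivatives kill $y^p$, and $G\in\End_S(M)$.

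With these checks in place, the argument is complete. Each $L_{\varphi,i}$ commutes with all generators $z_j$, so it lies in $\End_{\An(k)}(M)$.
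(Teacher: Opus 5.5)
Your proof is correct and is essentially the paper's argument. It uses the $Z$-basis $\varphi(z_1)^{i_1}\cdots\varphi(z_{2n})^{i_{2n}}$ and the $Z$-linearity of $L_{\varphi,i}$ to reduce to commuting with each $\varphi(z_j)=G\nu_jG^{-1}+\overline{y_j}$, and then checks that commutator directly after conjugating by $G$.

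One slip should be deleted: $L_{\varphi,i}$ does not commute with the $S$-algebra generated by the $\varphi(z_j)$, because $[L_{\varphi,i},y_j]=G[D_i,y_j]G^{-1}=\delta_{i,j}$. It only commutes with the $Z$-algebra they generate, which is all your second step actually uses.
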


\begin{proof}
As noted in Section \ref{notation}, the algebra
$\An(k)$ is generated as a $Z$-algebra by
$$\varphi(z_1), \dots, \varphi(z_n), \varphi(\partial_1), \dots, \varphi(\partial_n).$$ 
Hence if an element in $\End_Z(M)$ is $\varphi(\An(k))$-linear, then it is also 
$\An(k)$-linear.
Therefore
it suffices to show that 
all $L_{\varphi,i}$  commute 
with 
$$\varphi(z_1), \dots, \varphi(z_n), \varphi(\partial_1), \dots, \varphi(\partial_n).$$

For $j=1,\dots,2n$ we have 
$$\varphi(z_j) = G \cdot ( \nu_j  + \overline{y_j} ) \cdot G^{-1},$$
and a direct computation shows
$$\left[ \nu_j + \overline{y_j}, 
\pdv{}{y_i} + \sum_{l=1}^n   \pdv{\overline{y}_l}{y_i} \hat \nu_l  \right] = 0.$$
Since conjugation by $G$ preserves commutators, it follows that
$$[\varphi(z_j), L_{\varphi,i}] = 0$$
for $j=1, \dots, 2n$, as required.  
\end{proof}

Note that $L_{\varphi,i}$ coincide with 
$L_i$, when $\varphi$ is the identity morphism.
For a general endomorphism $\varphi$, the elements
 $L_{\varphi,i}$  
remain naturally related to $L_i$, as explained 
in the following result.

\begin{corollary}
\label{gammadef}
For $i=1,\dots,2n$, we have
$$ 
[L_{\varphi, i}, y_j]  =  \delta_{i,j}.
 $$
In particular, there exist elements $\gamma_1,\dots,\gamma_{2n}
\in S$ such that
$$ 
L_{\varphi,i}  = L_i + \gamma_i,  
$$ 
for $i=1,\dots,2n$.
\end{corollary}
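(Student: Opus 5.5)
The plan is to compute the commutator $[L_{\varphi,i},y_j]$ directly, and then to identify $L_{\varphi,i}-L_i$ through a centralizer argument.

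\textbf{Step 1: the commutator.} For $y_j \in S$, multiplication by $y_j$ is central in $\End_S(M)$. In particular it commutes with $G$ and $G^{-1}$, so
$$
[L_{\varphi,i},y_j] = G\,\Bigl[\pdv{}{y_i} + \sum_{l=1}^{2n}\pdv{\overline{y}_l}{y_i}\hat\nu_l,\; y_j\Bigr]\,G^{-1}.
$$
Each term $\pdv{\overline{y}_l}{y_i}\hat\nu_l$ is $S$-linear, so it commutes with $y_j$. The operator $\pdv{}{y_i}$ acts on $M = S[T_1,\dots,T_n]/(T_1^p,\dots,T_n^p)$ coefficientwise, so the Leibniz rule gives $[\pdv{}{y_i},y_j]=\delta_{i,j}$. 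Hence $[L_{\varphi,i},y_j] = G\,\delta_{i,j}\,G^{-1} = \delta_{i,j}$. Taking $\varphi$ to be the identity (where $G=1$ and $L_{\varphi,i}=L_i$), the same computation gives $[L_i,y_j]=\delta_{i,j}$.

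\textbf{Step 2: centralizer argument.} Put $\gamma_i := L_{\varphi,i}-L_i \in \End_Z(M)$. By Step 1, $[\gamma_i,y_j]=0$ for all $j$. Since $\gamma_i$ is also $Z$-linear and $S$ is generated over $Z$ by the $y_j$, it follows that $\gamma_i$ is $S$-linear, i.e.\ $\gamma_i\in\End_S(M)$.

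Both $L_{\varphi,i}$ (Proposition \ref{basisofAlinearphi}) and $L_i$ lie in $\End_{\An(k)}(M)$. Therefore $\gamma_i$ commutes with $\An(k)$, and, being $S$-linear, also with $S$. It thus commutes with the image of $\An(k)\otimes_Z S$, which under the identification $\An(k)\otimes_Z S\cong\End_S(M)$ is all of $\End_S(M)\cong\Mat_{p^n}(S)$.

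An element of $\Mat_{p^n}(S)$ commuting with all of $\Mat_{p^n}(S)$ is a scalar matrix, so $\gamma_i\in S$. This gives $L_{\varphi,i}=L_i+\gamma_i$.

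\textbf{Main obstacle.} I expect no serious difficulty. The only delicate point is making sure that $\pdv{}{y_i}$, viewed as an element of $\End_Z(M)$, satisfies the Leibniz rule against multiplication by $y_j$ and commutes with the $\hat\nu_l$ up to the derivative of the coefficients. Note that the $\hat\nu_l$ have constant coefficients in the monomial basis, so $\pdv{}{y_i}$ and $\hat\nu_l$ commute. This follows from the definition of $\pdv{}{y_i}$ acting on the $S$-coefficients in the monomial basis $T_1^{i_1}\cdots T_n^{i_n}$.
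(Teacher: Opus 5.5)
Your proof is correct and follows essentially the same route as the paper. It computes $[L_{\varphi,i},y_j]$ using that $G$ and the $\hat\nu_l$ commute with $S$, deduces $L_{\varphi,i}-L_i\in\End_S(M)$, and then uses Proposition \ref{basisofAlinearphi} together with $\End_S(M)$ being generated by $\An(k)$ and $S$ to conclude that the difference lies in the center $S$.
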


\begin{proof}
The commutator relations follow directly from the definition  
$L_{\varphi, i}$, since $G$ and $\hat \nu_i$ 
all commute with $S$. By the observation preceding the corollary, 
analogous relations hold for $L_i$. 
Hence, the differences $L_{\varphi,i}-L_i$ 
commute with all $y_j$, for $j=1,\dots,2n$, and thus lie in $\End_S(M)$. 
As $\End_S(M)$ is generated by $\An(k)$ and $S$, 
it follows from Proposition \ref{basisofAlinearphi} that these differences 
are elements of the center $S$ of $\End_S(M)$, which proves the existence 
of $\gamma_i$..
\end{proof}

An important insight of Tsuchimoto is that the $p$-power map on
$\End_S(M)$ determines the elements $\gamma_i$
appearing in Corollary~\ref{gammadef} by a formula that does not involve $G$.
To explain this, we first recall the following result due to N.~Jacobson
(\cite[Chapter~V, \S7]{JacobsonLieAlgebras}).

\begin{lemma} 
\label{pthpowercomm}
Let $R$ be a $\ZZ/p\ZZ$-algebra, and let 
$C \subseteq R$ be a commutative subalgebra. 
Let $a, b \in R$ with $b \in C$, and assume that 
$\ad(a)(C) \subseteq C$. Then
$$ (a+b)^p = a^p + b^p + \ad(a)^{p-1}(b).$$
 \end{lemma}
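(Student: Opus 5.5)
The plan is to use Jacobson's formula for $(a+b)^p$ in characteristic $p$ and reduce it to a commutative computation. Set $a_t = a$ and $b_t = tb$ with $t$ a formal variable, and work in $R[t]$. Then $C[t]$ is still commutative, $b_t \in C[t]$, and $\ad(a)(C[t]) \subseteq C[t]$. We expand
$$
(a+tb)^p = \sum_{m=0}^{p} t^m s_m(a,b),
$$
where $s_m$ is the sum of all words in $a,b$ of length $p$ with exactly $m$ letters $b$. The goal is to show $s_m = 0$ for $2 \le m \le p-1$, that $s_p = b^p$, that $s_0 = a^p$, and that $s_1 = \ad(a)^{p-1}(b)$.

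The case $m=1$ is exactly the identity already used in the proof of Lemma~\ref{ppoweridcomm}:
$$
s_1 = \sum_{i=0}^{p-1} a^i b\, a^{p-1-i} = \ad(a)^{p-1}(b).
$$
As there, this holds because left and right multiplication by $a$ commute and $\binom{p-1}{i} = (-1)^i$ in $\ZZ/p\ZZ$.

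For $2 \le m \le p-1$ I would use the standard differentiation trick. Differentiate $(a+tb)^p$ with respect to $t$ to get
$$
\sum_{i=0}^{p-1} (a+tb)^i\, b\, (a+tb)^{p-1-i} = \ad(a+tb)^{p-1}(b),
$$
applying the same identity with $a+tb$ in place of $a$. Now expand $\ad(a+tb)^{p-1}(b)$. Every element of $C[t]$ commutes with $b$, and $\ad(a)$ maps $C[t]$ into itself, so every iterate $\ad(a)^{j}(b)$ lies in $C$. Consequently, in any word of the form $\ad(x_1)\cdots\ad(x_{p-1})(b)$ with $x_i \in \{a, tb\}$, the innermost $\ad(tb)$ acts on an element of $C[t]$ and gives $0$. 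Thus only the term with all $x_i = a$ survives, and
$$
\frac{d}{dt}(a+tb)^p = \ad(a)^{p-1}(b),
$$
which is independent of $t$. On the other hand this derivative equals $\sum_m m\, t^{m-1} s_m$. Comparing coefficients gives $m\, s_m = 0$ for $m \ge 2$, hence $s_m = 0$ for $2 \le m \le p-1$, since $m$ is invertible mod $p$. The endpoints are immediate: $s_p = b^p$ and $s_0 = a^p$ hold trivially. Setting $t=1$ then yields the formula.

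The main obstacle is justifying the vanishing argument carefully: I need $\ad(a)^j(b) \in C$ for all $j$, which follows by induction from $\ad(a)(C) \subseteq C$, and I need $\ad(tb)$ to kill $C[t]$, which uses the commutativity of $C$. One subtlety is that $\frac{d}{dt}$ is not multiplicative on $R[t]$ in general. However, the Leibniz rule for products of noncommuting factors, $\frac{d}{dt}(fg) = f'g + fg'$, still holds in $R[t]$ because $t$ is central, and that is all the argument uses.
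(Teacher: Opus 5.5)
Your argument is correct and complete. The paper gives no proof of its own and only cites Jacobson (\emph{Lie Algebras}, Ch.~V, \S7); your differentiation of $(a+tb)^p$ in $R[t]$, combined with $\sum_{i=0}^{p-1}x^i y x^{p-1-i}=\ad(x)^{p-1}(y)$, is exactly Jacobson's derivation of his $p$-th power formula. The one step specific to this lemma is that every term of $\ad(a+tb)^{p-1}(b)$ containing $\ad(tb)$ vanishes, because its innermost $\ad(tb)$ hits some $\ad(a)^k(b)\in C$; this is what reduces Jacobson's general formula to the stated one.
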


As a consequence we obtain the following result.

\begin{proposition}
\label{prop:p-power-identities}
For $i=1,\dots,n$, we have the following identities in $S$:
$$
L_{\varphi,i}^p
=
\pdv[p-1]{ }{y_i}
\Biggl(
\sum_{l=1}^n
\pdv{\overline{y}_l}{y_i}\, \overline{y}_{l+n}
\Biggr).
$$
Here, the right-hand side should be understood as the $(p-1)$-st partial derivative
of the polynomial inside the parentheses.
\end{proposition}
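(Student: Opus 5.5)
The plan is to compute the $p$-th power after removing the conjugation by $G$, using Lemma~\ref{pthpowercomm} twice, and then to read off the answer from a weight-zero component.

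\emph{Reduction to a scalar.} Write
$$
L_{\varphi,i}^p = G\,(a+b_1+b_2)^p\,G^{-1},\qquad a=\pdv{}{y_i},\quad b_1=\sum_{l=1}^n f_l\,\hat\nu_l,\quad b_2=\sum_{l=n+1}^{2n} f_l\,\hat\nu_l,
$$
where $f_l=\pdv{\overline{y}_l}{y_i}\in S$. Here $b_1$ involves only the operators $-\pdv{}{T_l}$, and $b_2$ only the multiplications by $T_l$.

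First I will check that $L_{\varphi,i}^p$ really lies in $S$, so that only its scalar part matters.
\begin{itemize}
\item By Corollary~\ref{gammadef}, $[L_{\varphi,i},y_j]=\delta_{i,j}$ is central. Hence $[L_{\varphi,i}^p,y_j]=\ad(L_{\varphi,i})^p(y_j)=0$, so $L_{\varphi,i}^p\in\End_S(M)$.
\item By Proposition~\ref{basisofAlinearphi}, $L_{\varphi,i}^p$ is also $\An(k)$-linear.
\item $\End_S(M)$ is generated by $\An(k)$ and $S$, so $L_{\varphi,i}^p$ lies in the center $S$.
\end{itemize}
Consequently $G$ drops out, and it suffices to compute the $S$-scalar part of $(a+b_1+b_2)^p$.

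\emph{Two applications of Jacobson's formula.} For the first, take $C_1=S[T_1,\dots,T_n]/(T^p)$. This subalgebra is commutative, contains $b_2$, and is preserved by $\ad(a+b_1)$, since both $\pdv{}{y_i}$ and $f\,\pdv{}{T_l}$ act as derivations of $C_1$. Lemma~\ref{pthpowercomm} gives
$$
(a+b_1+b_2)^p=(a+b_1)^p+b_2^p+\ad(a+b_1)^{p-1}(b_2).
$$
Here $b_2^p=\sum f_l^pT^p=0$. For the second application, take $C_2=S[\pdv{}{T_1},\dots,\pdv{}{T_n}]$, which is commutative and preserved by $\ad(a)$. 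This gives
$$
(a+b_1)^p=a^p+b_1^p+\ad(a)^{p-1}(b_1)=\ad(a)^{p-1}(b_1),
$$
because $\pdv{}{y_i}$ and $\pdv{}{T_l}$ are nilpotent of order $p$ in characteristic $p$.

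\emph{Extracting the scalar part.} Grade $\End_S(M)$ by giving $T_l$ weight $1$ and $\pdv{}{T_l}$ weight $-1$; the scalars $S$ have weight $0$. In this grading:
\begin{itemize}
\item $\ad(a)^{p-1}(b_1)$ has weight $-1$, so it contributes nothing to the scalar part.
\item In $\ad(a+b_1)^{p-1}(b_2)$, expand $\ad(a+b_1)^{p-1}$ into words in $\ad(a)$ and $\ad(b_1)$. Since $\ad(b_1)$ lowers weight by one, the weight-zero part comes exactly from the words containing a single $\ad(b_1)$.
\end{itemize}
Using $[-f_l\pdv{}{T_l},\,g\,T_l]=-f_l\,g$, the scalar part becomes
$$
-\sum_{l=1}^n\sum_{k=0}^{p-2}\pdv[p-2-k]{}{y_i}\Bigl(f_l\,\pdv[k]{}{y_i} f_{l+n}\Bigr).
$$

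\emph{Main obstacle.} What remains is to identify this sum with $\pdv[p-1]{}{y_i}\bigl(\sum_l \pdv{\overline{y}_l}{y_i}\,\overline{y}_{l+n}\bigr)$, and I expect this combinatorial identity to be the main difficulty. Write $\partial=\pdv{}{y_i}$, $f_l=\partial\overline{y}_l$ and $f_{l+n}=\partial\overline{y}_{l+n}$. The approach is to expand each term by Leibniz, reduce the resulting binomial sums $\sum_k\binom{p-2-k}{j}$ modulo $p$ via the hockey-stick identity $\sum_{m\le p-2}\binom{m}{j}=\binom{p-1}{j+1}\equiv(-1)^{j+1}$, and compare with the Leibniz expansion of $\partial^{p-1}(\partial\overline{y}_l\cdot\overline{y}_{l+n})$, where $\binom{p-1}{j}\equiv(-1)^j$. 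The sign conventions in $\hat\nu_l$, in the bracket, and in Lemma~\ref{pthpowercomm} have to be tracked carefully here. Separately, $p=2$ needs a brief check of where Lemma~\ref{pthpowercomm} was invoked, since for $p=2$ commutator terms that vanish for odd $p$ may survive.
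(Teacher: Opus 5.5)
Your proposal is correct, but it splits the operator differently from the paper, and that choice is what creates your ``main obstacle''. In your notation $f_l=\pdv{\overline{y}_l}{y_i}$, the paper groups $\pdv{}{y_i}$ with the $T$-multiplication terms, $a=\pdv{}{y_i}+\sum_{l=n+1}^{2n} f_l\hat\nu_l$. It takes $b=\sum_{l=1}^{n} f_l\hat\nu_l$, which lies in the commutative algebra generated by $S$ and the $\pdv{}{T_l}$. A second application of Lemma~\ref{pthpowercomm} gives $a^p=0$, hence $(a+b)^p=\ad(a)^{p-1}(b)$.

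The paper then writes
$b=\sum_{l\le n} f_l(\hat\nu_l-\overline{y}_{n+l})+\sum_{l\le n} f_l\,\overline{y}_{n+l}$
and uses $\ad(a)(\hat\nu_l-\overline{y}_{n+l})=0$. So on the first sum $\ad(a)^{p-1}$ only differentiates the coefficients $f_l$, which gives $\pdv[p]{\overline{y}_l}{y_i}=0$. On the scalar second sum it acts as $\pdv[p-1]{ }{y_i}$, and the formula follows in one line, with no grading and no binomial sums. In your split, $\pdv{}{y_i}$ sits with the $\pdv{}{T_l}$-terms, so $\ad(a+b_1)^{p-1}(b_2)$ interleaves the two kinds of operators. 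You therefore have to extract the weight-zero part and then resum. The paper's ``flat'' combination $\hat\nu_l-\overline{y}_{n+l}$ does that summation by parts implicitly. What your route buys is transparency: it is a mechanical expansion in which you see exactly which words contribute.

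Your sketched resummation does close. Summing the Leibniz coefficients over $k$ gives $\binom{p-1}{j+1}\equiv(-1)^{j+1}$ for $0\le j\le p-2$. After your overall minus sign, this matches $\binom{p-1}{j}\equiv(-1)^j$ in the expansion of $\pdv[p-1]{ }{y_i}\bigl(f_l\,\overline{y}_{l+n}\bigr)$. The only unmatched term, $j=p-1$, is $\pdv[p]{\overline{y}_l}{y_i}\,\overline{y}_{l+n}=0$.

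Two of your precautions are unnecessary.
\begin{enumerate}
\item The a priori argument that $L_{\varphi,i}^p\in S$ is valid but can be dropped, because your expansion already produces a scalar:
\begin{itemize}
\item the word with no $\ad(b_1)$ gives $\sum_m \pdv[p]{\overline{y}_{n+m}}{y_i}\,T_m=0$;
\item words with two or more $\ad(b_1)$ vanish, since after the first one you are in $S$, which $\ad(a)$ preserves and $\ad(b_1)$ kills;
\item $\ad(a)^{p-1}(b_1)=-\sum_l \pdv[p]{\overline{y}_l}{y_i}\,\pdv{}{T_l}$ is itself zero, not merely of weight $-1$.
\end{itemize}
\item No separate check is needed for $p=2$. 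There Lemma~\ref{pthpowercomm} reads $(a+b)^2=a^2+b^2+[a,b]$, which holds in any ring of characteristic $2$.
\end{enumerate}
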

\begin{proof}
Set
$$
a := \pdv{}{y_i}
+ \sum_{l=n+1}^{2n}
\pdv{\overline{y}_l}{y_i} \hat \nu_l,
\qquad
b :=   \sum_{l=1}^{n}
\pdv{\overline{y}_l}{y_i} \hat \nu_l.
$$
Applying Lemma~\ref{pthpowercomm} with $C \subseteq \End_Z(M)$
being the commutative subalgebra generated by $S$ and
$\hat \nu_{1},\dots,\hat \nu_{n}$ gives
$$
(a+b)^p = a^p + \ad(a)^{p-1}(b),
$$
since $b^p=0$. Next, write
$$
a' := \pdv{}{y_i},
\qquad
b' := \sum_{l=n+1}^{2n}
\pdv{\overline{y}_l}{y_i} \hat \nu_l 
$$
and apply Lemma~\ref{pthpowercomm} again with the commutative subalgebra generated by
$S$ and $\hat \nu_{n+1},\dots,\hat \nu_{2n}$, yielding
$$
a^p
=
(a')^p + (b')^p + \ad(a')^{p-1}(b')
=
\sum_{l=n+1}^{2n}
\pdv[p]{\overline{y}_l}{y_i} \hat \nu_l
=
0,
$$
since $(a')^p=(b')^p=0$. Hence
$$
(a+b)^p = \ad(a)^{p-1}(b).
$$
Now decompose
$$
b = b_1 + b_2, \quad
b_1 :=   \sum_{l=1}^{n}
\pdv{\overline{y}_l}{y_i}
\Bigl( \hat \nu_l  - \overline{y}_{n+l} \Bigr),
\qquad
b_2 := \sum_{l=1}^n
\pdv{\overline{y}_l}{y_i}\, \overline{y}_{n+l}.
$$
Since $[\hat \nu_{n+j}, \hat \nu_l] = \delta_{j,l}$, for
$j,l=1,\dots,n$, we find
$$
\ad(a)\Bigl( \hat \nu_l - \overline{y}_{n+l} \Bigr) = 0,
$$
and thus
$$
\ad(a)^{p-1}(b_1)
=
 \sum_{l=1}^n
\ad(a)^{p-1}\Bigl( \pdv{\overline{y}_l}{y_i} \Bigr)
\Bigl( \hat \nu_l - \overline{y}_{n+l} \Bigr)
=
  \sum_{l=1}^n
\pdv[p]{\overline{y}_l}{y_i}
\Bigl(\hat \nu_l - \overline{y}_{n+l} \Bigr)
=
0.
$$
Therefore,
$$
(a+b)^p
=
\ad(a)^{p-1}(b_2)
=
\pdv[p-1]{ }{y_i}
\Biggl(
\sum_{l=1}^n
\pdv{\overline{y}_l}{y_i}\, \overline{y}_{n+l}
\Biggr).
$$
Finally, since $L_{\varphi,i} = G (a+b) G^{-1}$ and $G$ commutes with $S$, we obtain
$$
L_{\varphi,i}^p
=
G (a+b)^p G^{-1}
=
\pdv[p-1]{ }{y_i}
\Biggl(
\sum_{l=1}^n
\pdv{\overline{y}_l}{y_i}\, \overline{y}_{n+l}
\Biggr),
$$
as claimed.
\end{proof}

Lemma \ref{pthpowercomm} also implies the following result.

\begin{proposition}
\label{prop:p-power-identities-scalar}
For $i=1,\dots,2n$ and $\gamma \in S$, we have 
$$
(L_{\varphi,i}+ \gamma)^p  =  L_{\varphi,i}^p + \gamma^p + 
\pdv[p-1]{\gamma}{y_i}.$$
\end{proposition}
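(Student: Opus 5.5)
We apply Lemma~\ref{pthpowercomm} with $R=\End_Z(M)$, $a=L_{\varphi,i}$, $b=\gamma$, and $C=S$, viewed as the commutative subalgebra of $\End_Z(M)$ acting on $M$ by multiplication. Clearly $b=\gamma\in C$, so we only need to check the hypothesis $\ad(a)(C)\subseteq C$. Once that is done, the lemma gives
$$
(L_{\varphi,i}+\gamma)^p = L_{\varphi,i}^p + \gamma^p + \ad(L_{\varphi,i})^{p-1}(\gamma),
$$
and it remains to identify the last term with $\pdv[p-1]{\gamma}{y_i}$.

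The key computation is that $\ad(L_{\varphi,i})$ acts on $S$ as $\pdv{}{y_i}$. By Corollary~\ref{gammadef} we may write $L_{\varphi,i}=L_i+\gamma_i$ with $\gamma_i\in S$. Since $S$ is commutative, $\ad(\gamma_i)$ vanishes on $S$, so $\ad(L_{\varphi,i})|_S=\ad(L_i)|_S$. Now $L_i=\pdv{}{y_i}+\hat\nu_i$, and $\hat\nu_i$ is $S$-linear, so it commutes with multiplication by any $s\in S$. Hence
$$
[L_i,s]=\Bigl[\pdv{}{y_i},s\Bigr]=\pdv{s}{y_i},
$$
where the last equality is the Leibniz rule for the formal derivation on $M=S[T]/(T^p)$ with respect to $y_i$. (Corollary~\ref{gammadef} already records the special case $s=y_j$.) Alternatively, one can argue directly from the definition of $L_{\varphi,i}$: $G$ and $\hat\nu_l$ commute with $S$, so conjugation by $G$ does not affect commutators with elements of $S$, and only the $\pdv{}{y_i}$ term contributes.

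It follows that $\ad(L_{\varphi,i})(S)\subseteq S$, which verifies the hypothesis of Lemma~\ref{pthpowercomm}. Iterating the computation gives $\ad(L_{\varphi,i})^{p-1}(\gamma)=\pdv[p-1]{\gamma}{y_i}$, which completes the identity.

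No step here is a genuine obstacle. The only point needing care is that $\pdv{}{y_i}\in\End_Z(M)$ really is a derivation with respect to multiplication by $S$, and this holds because it acts coefficientwise on the free $S$-module $M$ with the monomial basis in the $T$'s.
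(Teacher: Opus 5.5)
Your proposal is correct and matches the paper's proof: both apply Lemma~\ref{pthpowercomm} in $\End_Z(M)$ with $a=L_{\varphi,i}$, $b=\gamma$, $C=S$, and use Corollary~\ref{gammadef} to see that $\ad(L_{\varphi,i})$ acts on $S$ as $\pdv{}{y_i}$. The only difference is that you write out the check $\ad(L_{\varphi,i})(S)\subseteq S$, via $L_{\varphi,i}=L_i+\gamma_i$ and the Leibniz rule, which the paper leaves implicit.
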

\begin{proof}
The identity follows by applying Corollary \ref{gammadef} and
Lemma~\ref{pthpowercomm} with $C=S$, $a=L_{\varphi,i}$, and $b=\gamma$.
\end{proof}

\begin{corollary}
\label{constants}
The elements $\gamma_1,\dots,\gamma_{2n}$ 
described in Corollary~\ref{gammadef} satisfy
the following differential equations:
$$
\pdv[p-1]{ }{y_i}
\Biggl(
\sum_{l=1}^{n}
\pdv{\overline{y}_l}{y_i}\, \overline{y}_{n+l}
\Biggr)
=
\gamma_i^p +
\pdv[p-1]{\gamma_i}{y_i},
 $$
Moreover, these differential equations uniquely determine
$\gamma_i$  as elements of $S$.
\end{corollary}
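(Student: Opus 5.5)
The plan is to compute $L_{\varphi,i}^p$ in two ways. The first comes from Proposition~\ref{prop:p-power-identities}. The second uses Corollary~\ref{gammadef}, writing $L_{\varphi,i} = L_i + \gamma_i$. Comparing the two gives the differential equation. Uniqueness will follow from an elementary degree argument in $S$, using only that $\gamma_i\in S$.

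First I need $L_i^p$. Since $L_{\varphi,i}$ coincides with $L_i$ when $\varphi$ is the identity, I apply Proposition~\ref{prop:p-power-identities} to the identity. Then $\overline{y}_l = y_l$, so $\pdv{\overline{y}_l}{y_i}\,\overline{y}_{l+n} = \delta_{l,i}\, y_{l+n}$. Its $(p-1)$-st derivative with respect to $y_i$ vanishes for $p>2$, since it is either $0$ or $y_{i+n}$, which does not involve $y_i$ when $i\le n$. For $p=2$ it is $\pdv{y_{i+n}}{y_i}=0$ as well. Hence $L_i^p = 0$.

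Next I apply Proposition~\ref{prop:p-power-identities-scalar}. It is stated for $L_{\varphi,i}$, but the identity case is precisely $L_i$; equivalently, I use Lemma~\ref{pthpowercomm} with $C=S$, $a=L_i$, $b=\gamma_i$, noting $[L_i,y_j]=\delta_{i,j}$. This gives
$$L_{\varphi,i}^p=(L_i+\gamma_i)^p = L_i^p + \gamma_i^p + \pdv[p-1]{\gamma_i}{y_i} = \gamma_i^p + \pdv[p-1]{\gamma_i}{y_i}.$$
Equating this with Proposition~\ref{prop:p-power-identities} yields the claimed equation. The indexing issue in that proposition (stated for $i\le n$ though the proof works for all $i$) should be noted: the proof of Proposition~\ref{prop:p-power-identities} does not use $i\le n$, so the equation holds for all $i=1,\dots,2n$.

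For uniqueness, suppose $\gamma,\gamma'\in S$ both satisfy the equation, and set $\delta=\gamma-\gamma'$. Since $\gamma^p-\gamma'^p=\delta^p$ in characteristic $p$ and differentiation is linear, we get $\delta^p + \pdv[p-1]{\delta}{y_i}=0$. If $\delta\neq 0$ with total degree $d$, then $\deg \delta^p = pd$, while $\pdv[p-1]{\delta}{y_i}$ has degree at most $d-(p-1)\le d < pd$ when $d\geq 1$. So the top-degree part cannot cancel, a contradiction. If $d=0$, then $\delta$ is a nonzero constant, its derivative vanishes, and $\delta^p\neq 0$, again a contradiction. Hence $\delta=0$.

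The main obstacle is mostly bookkeeping. One must confirm that $L_i^p=0$ (via the identity specialization) and that Lemma~\ref{pthpowercomm} applies to $L_i$ with $C=S$, which requires $\ad(L_i)(S)\subseteq S$; this holds since $\ad(L_i)$ acts on $S$ as $\pdv{}{y_i}$.
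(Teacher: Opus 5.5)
Your proposal is correct and follows the paper's proof: you compute $L_{\varphi,i}^p$ once via Proposition~\ref{prop:p-power-identities} and once as $(L_i+\gamma_i)^p$, using the identity-morphism case (where $L_i^p=0$) together with Jacobson's formula, and you get uniqueness from the same degree argument showing that $\gamma^p+\pdv[p-1]{\gamma}{y_i}=0$ forces $\gamma=0$. Your remark that Proposition~\ref{prop:p-power-identities} is stated only for $i\le n$, although its proof works for all $i=1,\dots,2n$, is correct and makes explicit a point the paper leaves implicit.
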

\begin{proof}
Recall that $L_i$ coincides with $L_{\varphi,i}$ when $\varphi$
is the identity morphism. Thus, combining
Proposition~\ref{prop:p-power-identities}
and Proposition~\ref{prop:p-power-identities-scalar}
(in the case where $\varphi$ is the identity morphism) yields
$$
(L_i + \gamma_i)^p
=
\gamma_i^p +
\pdv[p-1]{\gamma_i}{y_i}.
$$
But $L_{\varphi,i} = L_i + \gamma_i$,
so the claimed differential equations follow by another application
of Proposition~\ref{prop:p-power-identities}
(now for the morphism $\varphi$).

A degree argument in $S$ shows that the homogeneous equation
$$
\gamma^p + \pdv[p-1]{\gamma}{y_i} = 0
$$
only admits the trivial solution $\gamma=0$ in $S$.
Hence $\gamma_i$ is uniquely determined.
\end{proof}

\subsection{Conditions for preserving the Poisson structure}

In this subsection, we discuss bounds on the characteristic
$p$ that ensure that the induced morphism $\varphi_Z$ of the
center is a Poisson morphism.
We use the notation and remarks introduced in Section~\ref{sectPoisson}. The main goal of this section
is to restate and slightly strengthen a result of Tsuchimoto
(\cite[Cor.~3.3]{Tsuchimoto2005Endomorphisms}).

\begin{theorem}
 \label{etalecenter}
Let $\varphi$ be a $k$-algebra endomorphism of $\An(k)$
as in the preceding subsections. If
$$
\deg(\varphi(z_l)) + \deg(\varphi(z_{n+l})) < 2p
$$
for $l=1,\dots,n$, then the restriction $\varphi_Z$
of $\varphi$ to the center $Z$ is a Poisson morphism.
In particular, the restriction $\varphi_Z$ is étale and preserves 
the standard $2$-form on $Z$.
\end{theorem}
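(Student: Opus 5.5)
The plan is to compare two computations of the commutators $[L_{\varphi,i},L_{\varphi,j}]$ in $\End_Z(M)$: one from the explicit formula of Proposition~\ref{basisofAlinearphi}, and one from Corollary~\ref{gammadef}. Equating them should give a ``symplectic'' identity for the Jacobian of $(\overline{y}_1,\dots,\overline{y}_{2n})$, corrected by the exterior derivative of the $1$-form $\sum_i\gamma_i\,dy_i$. The degree hypothesis will then force every $\gamma_i$ to be a constant, so the correction term vanishes.

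\emph{Step 1 (first commutator).} Write $a_{li}=\pdv{\overline{y}_l}{y_i}\in S$. Since conjugation by $G$ preserves commutators, and since $\pdv{}{y_i}$ commutes with the $\hat\nu_l$, we have
$$
[L_{\varphi,i},L_{\varphi,j}] = G\Big(\sum_l\big(\pdv{a_{lj}}{y_i}-\pdv{a_{li}}{y_j}\big)\hat\nu_l+\sum_{l,m}a_{li}a_{mj}[\hat\nu_l,\hat\nu_m]\Big)G^{-1}.
$$
The first sum vanishes by equality of mixed partials. The brackets $[\hat\nu_l,\hat\nu_m]$ are constants, namely the entries of a fixed matrix $\Omega$ (equal to $\pm\omega$; the sign is to be read off from $[\hat\nu_i,\nu_j]=-\delta_{i,j}$). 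Hence this commutator is the scalar $(J^T\Omega J)_{ij}$, where $J=(a_{li})$.

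\emph{Step 2 (second commutator).} From $L_{\varphi,i}=L_i+\gamma_i$ and $[L_i,y_j]=\delta_{i,j}$ we get
$$
[L_{\varphi,i},L_{\varphi,j}]=[L_i,L_j]+\pdv{\gamma_j}{y_i}-\pdv{\gamma_i}{y_j}.
$$
Here $[L_i,L_j]=[\hat\nu_i,\hat\nu_j]=\Omega_{ij}$. Comparing with Step 1 gives
$$
J^T\Omega J=\Omega+\Big(\pdv{\gamma_j}{y_i}-\pdv{\gamma_i}{y_j}\Big)_{i,j}.
$$

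\emph{Step 3 (relating $J$ to $J_\varphi$).} I must check how $\overline{y}_l$ relates to $\varphi_Z$. The extension of $\varphi_Z$ to $S$ sends $y_l$ to the unique $\overline{y}_l$ with $\overline{y}_l^{\,p}=\varphi(x_l)$; since $k$ is perfect, $\overline{y}_l$ is obtained from $\varphi(x_l)(y_1,\dots,y_{2n})$ by taking $p$-th roots of the coefficients. Thus $J$ is the Frobenius twist of $J_\varphi$, and the identity $J^T\Omega J=\Omega$ is equivalent to \eqref{2-formstable}, i.e.\ to $\varphi_Z$ being Poisson. It therefore suffices to show that each $\gamma_i$ is constant.

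\emph{Step 4 (degree argument).} Set $d_l=\deg\varphi(z_l)$. Since $\varphi(x_l)=\varphi(z_l)^p$ has $z$-degree at most $pd_l$, it has $x$-degree at most $d_l$, so $\deg\overline{y}_l\le d_l$. The polynomial $\sum_{l=1}^n\pdv{\overline{y}_l}{y_i}\,\overline{y}_{n+l}$ then has degree at most $\max_l(d_l+d_{n+l})-1\le 2p-2$. After $p-1$ derivatives, the left-hand side of Corollary~\ref{constants} has degree at most $p-1$. If $\deg\gamma_i=e\ge 1$, then $\gamma_i^p$ has degree $pe\ge p$, and $\pdv[p-1]{\gamma_i}{y_i}$ has strictly smaller degree, so nothing cancels it. This contradicts the degree bound, hence every $\gamma_i$ is constant.

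It follows that $d\gamma=0$, so $\varphi_Z$ is Poisson. The étale property and the invariance of the standard $2$-form then follow from the discussion in Section~\ref{sectPoisson}. I expect the main obstacle to be the bookkeeping in Steps 1 and 3: getting the signs of $[\hat\nu_l,\hat\nu_m]$ right, and correctly handling the Frobenius twist linking $J$ to $J_\varphi$. The degree argument itself is short.
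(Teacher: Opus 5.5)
Your proposal is correct and follows essentially the same route as the paper. The degree bound forces every $\gamma_i$ in Corollary~\ref{constants} to be constant, and then the identity $\hat J_\varphi^T\,\omega\,\hat J_\varphi=\omega+J_\gamma^T-J_\gamma$ of Corollary~\ref{matrix-id} combined with $J_\varphi=\hat J_\varphi^{[p]}$ yields \eqref{2-formstable}; your Steps~1--2 simply re-derive that corollary rather than quoting it, and in fact $[\hat\nu_s,\hat\nu_t]=\omega_{s,t}$, so your $\Omega$ is exactly $\omega$.
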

We will prove this result below. Tsuchimoto proves this result 
under the stronger assumptions
$$
\deg(\varphi(z_l)) < p/2, \quad l=1,\dots,2n.
$$
Notice that the streghthen version 
also applies in case 
$$
\deg(\varphi(z_l)) < p, \quad l=1,\dots,2n.
$$
The following example says that the streghthen version is 
optimal in some sense.

\begin{example}
Let $\varphi$ denote an endomorphism of the first Weyl 
algebra $A_1(k)$ of the form
$$ \varphi(z_1) = z_1, \qquad \varphi(z_2) = z_2 + z_2^p z^i_1,$$
for some integer $0 \leq i \leq p-1$. Then by Lemma \ref{pthpowercomm}
$$\varphi(z_1^p) = z_1^p, \qquad \varphi(z_2) = z_2^p (1- \delta_{i,p-1})+ z_2^{p^2} 
z_1^{ip}.$$
It follows that $\varphi$ is étale on the center if and only
if $i<p-1$. Note that the case $i=p-1$ is not within reach 
of  Theorem \ref{etalecenter} since  
$$\deg(\varphi(z_1)) + \deg(\varphi(z_2)) = p+1+i.
$$
Notice that when $i=0$ then 
$$
\Phi(z_1) = z_1 - p \cdot z_2^{p-1} z_1, \quad 
\Phi(z_2) = z_2 + z_2^p,
$$
defines a lifting of $\varphi$ to an endomorphism 
$\Phi$ of $\An(W_2(k))$. 
For $0 < i < p-1$, liftings also exist, although they are more complicated.
\end{example}

\bigskip 

\bigskip

Recall that by Corollary \ref{gammadef} there exist elements
$\gamma_1,\dots,\gamma_{2n} \in S$ such that
$$
L_{\varphi, i} = L_i + \gamma_i, \qquad i=1,\dots,2n.
$$
Let $J_\gamma$ denote the matrix
$$
J_\gamma = \Bigl(\pdv{\gamma_i}{y_j}\Bigr) \in \mathrm{Mat}_{2n}(S),
$$
and let $\hat J_\varphi$ denote the Jacobian matrix of the morphism induced by $\varphi$ on $S$:
$$
\hat J_\varphi = \Bigl(\pdv{\overline{y_i}}{y_j}\Bigr).
$$

\begin{corollary}
\label{matrix-id}
For $i,j=1,\dots,2n$, the following commutation relation holds:
$$
[ L_{\varphi, i}, L_{\varphi, j}] =
\sum_{l=1}^n
\left(
\pdv{\overline{y}_{n+l}}{y_i} \pdv{\overline{y}_l}{y_j}
-
\pdv{\overline{y}_l}{y_i} \pdv{\overline{y}_{n+l}}{y_j}
\right).
$$
In particular, 
$$
\hat J_\varphi^T \, \omega \, \hat J_\varphi = \omega + 
J_\gamma^T - J_\gamma.
$$
\end{corollary}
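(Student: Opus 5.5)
The plan is to compute the commutator directly from the definition in Proposition~\ref{basisofAlinearphi}, using the fact that conjugation by $G$ preserves commutators. Write $a_{li} := \pdv{\overline{y}_l}{y_i} \in S$. Then
$$
[L_{\varphi,i}, L_{\varphi,j}] = G \Bigl[\pdv{}{y_i} + \sum_{l} a_{li}\hat\nu_l,\; \pdv{}{y_j} + \sum_m a_{mj}\hat\nu_m\Bigr] G^{-1}.
$$
I will expand the inner commutator into four groups of terms, using that $\hat\nu_l$ commutes with $S$ and with $\pdv{}{y_i}$.

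\textbf{The four groups of terms.}
\begin{enumerate}
\item The term $[\pdv{}{y_i}, \pdv{}{y_j}]$ vanishes.
\item The mixed terms give
$$
\sum_m \pdv{a_{mj}}{y_i}\hat\nu_m - \sum_l \pdv{a_{li}}{y_j}\hat\nu_l .
$$
These cancel, since $\pdv{a_{mj}}{y_i} = \pdv[order={1,1}]{\overline{y}_m}{y_i,y_j} = \pdv{a_{mi}}{y_j}$ by equality of mixed partials in $S$.
\item The remaining term is $\sum_{l,m} a_{li}a_{mj}[\hat\nu_l,\hat\nu_m]$. I will compute $[\hat\nu_l,\hat\nu_m]$ from $[\nu_{n+l},\nu_l] = 1$, which holds because $\nu_{n+l} = \partial/\partial T_l$ and $\nu_l$ is multiplication by $T_l$. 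This gives $[\hat\nu_l,\hat\nu_{n+l}] = -1$ and $[\hat\nu_{n+l},\hat\nu_l] = 1$ for $l \le n$, and all other brackets are zero.
\end{enumerate}
Collecting these, the inner commutator equals the scalar
$$
\sum_{l=1}^n \bigl(a_{n+l,i}a_{l,j} - a_{l,i}a_{n+l,j}\bigr) \in S.
$$
Since $G$ commutes with $S$, conjugating by $G$ leaves this scalar unchanged, which proves the first formula.

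\textbf{The matrix identity.} I will compare the $(i,j)$ entries of both sides. On the right-hand side of the first formula, the sum is exactly
$$
\sum_{a,b} a_{ai}\,\omega_{ab}\,a_{bj} = (\hat J_\varphi^T\omega\hat J_\varphi)_{ij},
$$
because $\omega_{n+l,l} = 1$, $\omega_{l,n+l} = -1$, and all other entries of $\omega$ are zero.

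On the left-hand side, I use Corollary~\ref{gammadef} to write $L_{\varphi,i} = L_i + \gamma_i$. Then
$$
[L_i+\gamma_i, L_j+\gamma_j] = [L_i,L_j] + [L_i,\gamma_j] - [L_j,\gamma_i],
$$
since $\gamma_i$ and $\gamma_j$ commute. Because $\hat\nu_l$ commutes with $S$, we have $[L_i,\gamma] = \pdv{\gamma}{y_i}$ for $\gamma \in S$. The term $[L_i,L_j]$ is the first formula applied to the identity morphism, where $a_{li} = \delta_{li}$; it gives
$$
\sum_l \bigl(\delta_{n+l,i}\delta_{l,j} - \delta_{l,i}\delta_{n+l,j}\bigr) = \omega_{ij}.
$$
Finally, $\pdv{\gamma_j}{y_i} - \pdv{\gamma_i}{y_j}$ is the $(i,j)$ entry of $J_\gamma^T - J_\gamma$. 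This yields
$$
\hat J_\varphi^T\omega\hat J_\varphi = \omega + J_\gamma^T - J_\gamma .
$$

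The only delicate point is the sign bookkeeping for $[\hat\nu_l,\hat\nu_m]$ together with the convention for $\omega$. I will check it against the identity case, where both sides must reduce to $\omega$.
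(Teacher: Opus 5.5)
Your proposal is correct and follows the paper's proof essentially step for step. You expand the commutator inside the $G$-conjugation using $[\hat\nu_s,\hat\nu_t]=\omega_{s,t}$ and use that $G$ commutes with $S$; you then specialize to the identity to get $[L_i,L_j]=\omega_{i,j}$ and expand $[L_i+\gamma_i,L_j+\gamma_j]$. The only difference is that you spell out the cancellation of the first-order terms via equality of mixed partials, which the paper leaves as a "straightforward computation."
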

\begin{proof}
Using that
$$
[\hat \nu_s, \hat \nu_t] = \omega_{s,t}, \qquad s,t=1,\dots,2n,
$$ 
a straightforward computation shows that
$$
\left[
\pdv{}{y_i} + \sum_{l=1}^{2n} \pdv{\overline{y}_l}{y_i} \hat \nu_l,
\pdv{}{y_j} + \sum_{l=1}^{2n} \pdv{\overline{y}_l}{y_j} \hat \nu_l
\right]
=
\sum_{l=1}^n
\left(
\pdv{\overline{y}_{n+l}}{y_i} \pdv{\overline{y}_l}{y_j}
-
\pdv{\overline{y}_l}{y_i} \pdv{\overline{y}_{n+l}}{y_j}
\right).
$$
Hence,
$$
[ L_{\varphi, i}, L_{\varphi, j}] =
\sum_{l=1}^n
\left(
\pdv{\overline{y}_{n+l}}{y_i} \pdv{\overline{y}_l}{y_j}
-
\pdv{\overline{y}_l}{y_i} \pdv{\overline{y}_{n+l}}{y_j}
\right),
$$
since $G$ commutes with $S$. It follows that 
$[ L_{\varphi, i}, L_{\varphi, j}]$ is the $(i,j)$-th entry of the matrix
$\hat J_\varphi^T \, \omega \, \hat J_\varphi$.
Applying this conclusion to the case where  $\varphi$ is the 
identity morphism gives
$$
[ L_i, L_j] = \omega_{i,j}.
$$
Finally, since $L_{\varphi, i} = L_i + \gamma_i$, we have
$$
[ L_{\varphi, i}, L_{\varphi, j}] = [L_i+\gamma_i, L_j + \gamma_j] =[L_i,L_j] + [L_i, \gamma_j] - [L_j, \gamma_i] = \omega_{i,j} + \pdv{\gamma_j}{y_i} - \pdv{\gamma_i}{y_j},
$$
which yields the final identity stated in the corollary.
\end{proof}

We are now ready to prove Theorem \ref{etalecenter}.

\begin{proof}
The stated assumption is equivalent to the requirement that 
$$
\deg(\overline{y}_l) + \deg(\overline{y}_{n+l}) < 2p
$$
for each $l = 1, \dots, n$, where we use the standard degree 
on the polynomial ring $S$. It follows that the left-hand side
$$
\pdv[p-1]{ }{y_i}
\Biggl(
\sum_{l=1}^{n}
\pdv{\overline{y}_l}{y_i}\, \overline{y}_{n+l}
\Biggr)
$$
of the differential equation defining $\gamma_i$ in
Corollary \ref{constants} has degree strictly less than $p$.
On the other hand, the right-hand side
$$
\gamma_i^p + \pdv[p-1]{\gamma_i}{y_i}
$$
has degree at least $p$ unless $\deg(\gamma_i) \le 0$.
In particular, $\gamma_i$ must be a constant in $S$, and hence
the matrix $J_\gamma$ is zero.

It now follows from Corollary \ref{matrix-id} that
$$
\hat J_\varphi^T \, \omega \, \hat J_\varphi = \omega.
$$
Let $J_\varphi$ denote the Jacobian matrix associated with the restriction 
$\varphi_Z$ of $\varphi$ to the center. Then
$$
J_\varphi = \hat J_\varphi^{[p]} = \left( \left( \pdv{\overline{y}_i}{y_j} \right)^p \right),
$$
and hence
$$
J_\varphi^T \, \omega \, J_\varphi = \omega.
$$
The desired conclusion now follows from the discussion in Section \ref{sectPoisson}.
\end{proof}

\begin{remark}
In the case $n=1$, we only need $c_{12}$ to be zero in order to lift 
$\varphi$ to an endomorphism of $\An(W_2(k))$. Recall that
$$
c_{12} = \ad(u_1)^{p-1} \ad(u_2)^{p-1} (u_{12}),
$$
where $u_1 = \varphi(z_1)$ and $u_2 = \varphi(z_2)$, and the commutator 
of $u_1$ and $u_2$ in $\An(W_2(k))$ determines $u_{12} \in \An(k)$ 
by the identity
$$
[u_1, u_2] = 1 + p \cdot u_{12}.
$$
By the defining relations in $\An(W_2(k))$, we furthermore know that
$$
\deg(u_{12}) \leq \deg(u_1) + \deg(u_2) - 2.
$$
In particular, under the same assumptions as in Theorem~\ref{etalecenter}, we have
$$
\deg(u_{12}) < 2p - 2.
$$
Hence
$$
c_{12} = \ad(u_1)^{p-1} \ad(u_2)^{p-1} (u_{12})
= \ad(z_1)^{p-1} \ad(z_2)^{p-1} (u_{12}) = 0,
$$
where we have used the conclusion of Remark~\ref{remakrtrace}.
It follows from Theorem~\ref{obstruction} that $\varphi$ admits a lift to 
$\An(W_2(k))$. Therefore, when $n=1$, we do not need the differential 
equations in Theorem~\ref{constants} to conclude the statement of 
Theorem~\ref{etalecenter}.
\end{remark}

There are examples of endomorphisms $\varphi$ which are not 
Poisson morphisms on the center. Surprisingly, one may even find 
examples where $\varphi$ is an isomorphism, as the following
example from \cite{BelovKontsevich2005} illustrates.

\begin{example}
\label{exnonpoisson}
Consider the second Weyl algebra $A_2(k)$, generated by 
$z_1, z_2, z_3,$ and $z_4$, and the automorphism 
$\varphi$ of $A_2(k)$ defined by
$$
\varphi(z_1) = z_1 + z_2^p z_3^{p-1}, \quad 
\varphi(z_i) = z_i \text{ for } i=2,3,4.
$$
Then, by Lemma~\ref{pthpowercomm},
$$
\varphi(x_1) = x_1 + x_2^p x_3^{p-1} - x_2, \quad
\varphi(x_i) = x_i \text{ for } i=2,3,4.
$$
In particular,
$$
c_{14} = \{\varphi(x_1), \varphi(x_4)\} = -1,
$$
which shows that $\varphi_Z$ is not a Poisson morphism.
In this case, the only nonzero entries of the matrix $C$ from Theorem~\ref{theoremidjac} 
are $c_{41} = 1$ and $c_{14} = -1$. Moreover, one may check that 
$\gamma_3 = y_2$, while $\gamma_1 = \gamma_2 = \gamma_4 = 0$.
\end{example}

In conclusion, it is worth mentioning that the matrix $C$ introduced in
Section \ref{sectlift} (see Theorem \ref{obstruction}), which measures the
obstruction to lifting the endomorphism $\varphi$ to a $W_2(k)$-algebra
endomorphism of $\An(W_2(k))$, is closely related to the matrix
$J_\gamma^T - J_\gamma$ associated with the solutions of the differential
equations in Corollary \ref{constants}. These matrices are linked by the
identities
\begin{align*}
    J_\varphi \, \omega^{-1} \, J_\varphi^T & = \omega^{-1} + C, \\ 
    \hat J_\varphi^T \, \omega \, \hat J_\varphi & = \omega + J_\gamma^T - J_\gamma, \\
    J_\varphi & = \hat J_\varphi^{[p]},
\end{align*}
derived in Theorem \ref{theoremidjac} and Corollary \ref{matrix-id}.
Therefore, in general and in view of the discussion in
Section \ref{poissonmorp}, the matrix $C$ vanishes if and only if
$J_\gamma$ is symmetric. Tsuchimoto’s result corresponds to the special
case where $J_\gamma$ is the zero matrix. In general, however, we obtain
the following criterion.

\begin{proposition}
\label{diffeqpois}
For $i=1,\dots,2n$, let $f_i \in Z = k[x_1,\dots,x_{2n}]$ be the unique
solutions of the differential equation
$$
\pdv[p-1]{ }{x_i}
\Biggl(
\sum_{l=1}^{n}
\pdv{\varphi(x_l)}{x_i}\, \varphi(x_{n+l})
\Biggr)
=
f^p +
\pdv[p-1]{f}{x_i}.
$$
Then $\varphi$ admits a lifting to a $W_2(k)$-algebra endomorphism of
$\An(W_2(k))$ if and only if
$$
\pdv{f_i}{x_j} = \pdv{f_j}{x_i}, \quad i,j=1,\dots,2n.
$$
\end{proposition}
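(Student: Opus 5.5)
The plan is to transport the differential equations of Corollary~\ref{constants} from $S$ to $Z$ by the Frobenius map, and then read off the statement from the identities collected just before the proposition. Since $k$ is perfect, the map $F\colon S\to Z$, $s\mapsto s^p$, is a bijective ring homomorphism. Concretely, it raises coefficients to the $p$-th power and sends $y_i\mapsto x_i$; it is semilinear over the Frobenius of $k$, which is harmless here. Two compatibilities of $F$ are needed.

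\emph{Compatibility with derivations.} For $g\in S$ one has $F\bigl(\pdv{g}{y_i}\bigr)=\pdv{F(g)}{x_i}$. Both sides are additive in $g$, so it suffices to check this on monomials $c\,{\bf y}^{\bf i}$, where it is immediate because $\pdv{}{y_i}$ has integer coefficients fixed by Frobenius.

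\emph{Compatibility with $\varphi$.} One has $F(\overline{y}_l)=\overline{y}_l^{\,p}=\varphi(y_l^p)=\varphi(x_l)$, because the extension of $\varphi_Z$ to $S$ is a ring map with $y_l^p=x_l$.

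Applying $F$ to the equation of Corollary~\ref{constants} now gives
$$
\pdv[p-1]{ }{x_i}\Biggl(\sum_{l=1}^n \pdv{\varphi(x_l)}{x_i}\,\varphi(x_{n+l})\Biggr) = (\gamma_i^p)^p+\pdv[p-1]{(\gamma_i^p)}{x_i},
$$
so $\gamma_i^p$ solves the equation in the statement. Uniqueness of solutions in $Z$ follows from the same degree argument as in Corollary~\ref{constants}: the homogeneous equation $f^p+\pdv[p-1]{f}{x_i}=0$ forces $f=0$. Hence $f_i=F(\gamma_i)=\gamma_i^p$.

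Applying the derivation compatibility once more yields
$$
\pdv{f_i}{x_j}=F\Bigl(\pdv{\gamma_i}{y_j}\Bigr),
$$
that is, $J_f=J_\gamma^{[p]}$ entrywise. As $F$ is injective, the condition $\pdv{f_i}{x_j}=\pdv{f_j}{x_i}$ for all $i,j$ is equivalent to $J_\gamma$ being symmetric.

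It remains to show that $J_\gamma$ is symmetric if and only if $\varphi$ lifts. By Corollary~\ref{matrix-id}, symmetry of $J_\gamma$ is equivalent to $\hat J_\varphi^T\omega\hat J_\varphi=\omega$. Since $\omega$ has entries in $\ZZ/p\ZZ$ and $J_\varphi=\hat J_\varphi^{[p]}=F(\hat J_\varphi)$, injectivity of $F$ shows this is equivalent to $J_\varphi^T\omega J_\varphi=\omega$. By \eqref{2-formstable} and \eqref{poissonmorp}, that is equivalent to $J_\varphi\omega^{-1}J_\varphi^T=\omega^{-1}$, i.e.\ to $C=0$ by Theorem~\ref{theoremidjac}. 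Finally, $C=0$ is equivalent to the existence of a lift by Theorem~\ref{obstruction}.

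The main point requiring care is the Frobenius transport. One must check that the $(p-1)$-st derivative and the $p$-th power terms correspond exactly under $F$, which they do since $F$ is a ring isomorphism commuting with the partial derivatives. One must also keep the identification $J_\varphi=\hat J_\varphi^{[p]}$ consistent with the semilinearity on coefficients; this causes no trouble because all constant matrices involved are defined over $\ZZ/p\ZZ$.
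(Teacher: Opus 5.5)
Your proof is correct and follows the paper's own argument: you identify $f_i=\gamma_i^p$, so that $J=J_\gamma^{[p]}$ and symmetry of $J$ is equivalent to symmetry of $J_\gamma$, and you then chain Corollary~\ref{matrix-id}, the relation $J_\varphi=\hat J_\varphi^{[p]}$, the equivalence of \eqref{poissonmorp} and \eqref{2-formstable}, and Theorems~\ref{theoremidjac} and~\ref{obstruction}. The only difference is that the paper simply asserts $f_i=\gamma_i^p$, while you justify it by transporting the equation of Corollary~\ref{constants} along the Frobenius isomorphism $F\colon S\to Z$ and invoking uniqueness of solutions.
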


\begin{proof}
We begin by observing that $f_i = \gamma_i^p$ for $i=1,\dots,2n$.
Thus the matrix
$$
J = \bigl(\pdv{f_i}{x_j}\bigr) \in \Mat_{2n}(Z)
$$
is related to $J_\gamma$ by
$$
J = J_\gamma^{[p]} = \Bigl(\pdv{\gamma_i}{y_j}^p\Bigr).
$$
It follows that $J$ is symmetric if and only if $J_\gamma$ is symmetric.
By the remarks preceding the proposition, this completes the proof.
\end{proof}

Notice that, with the notation $J = (\pdv{f_i}{x_j})$ as in the proof above, we have
the identities
\begin{align*}
    J_\varphi \, \omega^{-1} \, J_\varphi^T & = \omega^{-1} + C, \\ 
    J_\varphi^T \, \omega \, J_\varphi & = \omega + J^T - J.
\end{align*}

 \section{Applications and concluding remarks}\label{lastsect}

In \cite[page 2328]{Bavula2008Inversion} it was conjectured that any endomorphism $\varphi$ of
$\An(k)$ is injective. This question is equivalent to the injectivity
of the induced map $\varphi_Z$ on the center.
Tsuchimoto \cite[Lemma 17]{Tsuchimoto2003Preliminaries} proved injectivity for $n=1$, 
but later Makar-Limanov \cite[page 793]{MakarLimanov2012}
gave an example of a non-injective endomorphism of $A_2(k)$ of degree
$p^2+p-1$.
Observe that any flat (and hence also étale) endomorphism of $Z$ is
injective, since $Z$ is an integral domain. Thus Theorem~\ref{etalecenter} 
establishes the injectivity of $\varphi$ 
under the stated degree bounds, of which $\deg(\varphi) < p$ is a special case.

In \cite[Theorem 2.7, Theorem 4.3]{LT}, we proved that endomorphisms of Weyl algebras over fields of
characteristic zero are flat and that birational endomorphisms are automorphisms.
These results crucially depend on the fact that reductions of endomorphisms
of Weyl algebras to (large) positive characteristic induce étale endomorphisms on the
center. Theorem \ref{etalecenter} gives explicit bounds on the characteristic
that guarantee this property. In particular, Theorem 2.7 (flatness) and Theorem 4.3 (birationality implies automorphism) in \cite{LT} hold
for endomorphisms $\varphi$ of Weyl algebras over fields of characteristic $p > \deg{\varphi}$.

\end{document}